\documentclass[11pt]{article}
\usepackage{amsmath,amsfonts,amssymb,amsthm}
\usepackage{geometry}
\usepackage{enumitem}
\usepackage{fancyhdr}
\usepackage{bbm}
\usepackage{hyperref}
\hypersetup{colorlinks=false}
\usepackage[
backend=biber,
style=numeric,
sorting=nyt
]{biblatex}

\addbibresource{references.bib}
\geometry{%
  letterpaper,
  lmargin=2.5cm,
  rmargin=2.5cm,
  tmargin=2.5cm,
  bmargin=2.5cm,
  footskip=13.6pt,
  headheight=12pt}
 
\pagestyle{fancy}
\fancyhead{}
\fancyfoot{}
\fancyfoot[R]{\thepage}

\allowdisplaybreaks
 
\newtheorem{theorem}{Theorem}[section]
\newtheorem{corollary}[theorem]{Corollary}
\newtheorem{lemma}[theorem]{Lemma}
\newtheorem{proposition}[theorem]{Proposition}
\newtheorem{conjecture}[theorem]{Conjecture}

\theoremstyle{definition}
\newtheorem{definition}[theorem]{Definition}
\newtheorem{example}[theorem]{Example}
\newtheorem{remark}[theorem]{Remark}

\renewcommand{\k}{\mathbbm{k}}

\setlist[enumerate]{label=\arabic*),ref=\arabic*)}

\title{Toward explicit Hilbert series of quasi-invariant polynomials in characteristic $p$ and $q$-deformed quasi-invariants}
\author{Frank Wang}
\date{\vspace{-5ex}}

\newcommand{\triv}{\mathrm{triv}}
\newcommand{\std}{\mathrm{std}}
\newcommand{\sgn}{\mathrm{sign}}
\newcommand{\Irrep}{\mathrm{Irrep}}
\renewcommand{\char}{\text{char\,}}
\newcommand{\C}{\mathbb{C}}
\renewcommand{\b}[1]{\textbf{#1}}
\newcommand{\Z}{\mathbb{Z}}

\begin{document}

\maketitle
\begin{abstract}
    We study the spaces $Q_m$ of $m$-quasi-invariant polynomials of the symmetric group $S_n$ in characteristic $p$. Using the representation theory of the symmetric group we describe the Hilbert series of $Q_m$ for $n=3$, proving a conjecture of Ren and Xu \cite{Ren_2020}. From this we may deduce the palindromicity and highest term of the Hilbert polynomial and the freeness of $Q_m$ as a module over the ring of symmetric polynomials, which are conjectured for general $n$. We also prove further results in the case $n=3$ that allow us to compute values of $m,p$ for which $Q_m$ has a different Hilbert series over characteristic 0 and characteristic $p$, and what the degrees of the generators of $Q_m$ are in such cases. We also extend various results to the spaces $Q_{m,q}$ of $q$-deformed $m$-quasi-invariants and prove a sufficient condition for the Hilbert series of $Q_{m,q}$ to differ from the Hilbert series of $Q_m$.
\end{abstract}

\section{Introduction}
Let $\k$ be a field, and let $S_n$ act on $V=\k^n$ by permuting the coordinates. A commonly studied object is the space of \textit{invariant} or \textit{symmetric} polynomials, which can be described as the set of $P\in\k[V]=\k[x_1,\dots,x_n]$ such that for all transpositions $s_{i,j}\in S_n$, $(1-s_{i,j})P=0$. A generalization of the space of invariant polynomials is the space $Q_m$ of $m$-quasi-invariant polynomials, defined as the set of $P\in\k[V]$ such that for all transpositions $s_{i,j}\in S_n$, $(1-s_{i,j})P$ is divisible by $(x_i-x_j)^{2m+1}$. Thus $m$-quasi-invariant polynomials are polynomials such that $(1-s_{i,j})P$ vanishes on the reflection hyperplane $x_i=x_j$ to some high order, a generalization of invariant polynomials, where $(1-s_{i,j})P$ vanishes on this hyperplane to infinite order.

Quasi-invariants were introduced by Chalykh and Veselov \cite{cmp/1104179957} in their work on quantum Calogero-Moser systems. Quasi-invariant polynomials were also found to be useful in the study of rational Cherednik algebras, where they can be used to describe representations of the spherical subalgebra of a rational Cherednik algebra \cite{berest2003cherednik}. These applications in the theory of integrable systems and representation theory have generated interest in the subject, leading to many interesting developments. In \cite{feigin}, Feigin and Veselov studied the relation between quasi-invariants and Calogero-Moser systems. In \cite{felder2003action}, Felder and Veselov computed the Hilbert series of $Q_m$ in characteristic zero using the Knizhnik-Zamolodchikov equations. Berest and Chalykh later defined quasi-invariant polynomials of a general complex reflection group \cite{berest_chalykh_2011}. Quasi-invariants in characteristic $p$ were first studied by Ren and Xu, where they computed the Hilbert series of $Q_m$ for $S_2$ and proved sufficient conditions for the Hilbert series of $Q_m$ in characteristic $p$ to differ from the Hilbert series in characteristic zero \cite{Ren_2020}.

The $q$-deformed quasi-invariants are a natural deformation of quasi-invariants introduced by Chalykh \cite{Chalykh2002MacdonaldPA} to prove various conjectures about Macdonald polynomials. In \cite{BEF}, Braverman, Etingof, and Finkelberg introduced a deformation of the rational Cherednik algebra which they called the cyclotomic DAHA. They showed that the cyclotomic DAHA acts on the space $Q_{m,q}$ of $q$-deformed quasi-invariants and used this to show that $Q_{m,q}$ is a flat deformation of $Q_m$ for all but countably many values of $q$. In this paper, we work toward a characterization of the specific values of $q$ for which this deformation is not flat, and what the Hilbert series of $Q_{m,q}$ is in such cases.

In Section \ref{sec:2} we give basic definitions and properties of $Q_m$ for an arbitrary symmetric group. In Section \ref{sec:3} we focus on the case $S_3$ and prove our main result about the Hilbert series of $Q_m$. In Section \ref{sec:4} we discuss how our results in Section \ref{sec:3} might be used to determine the values of $m,p$ such that the Hilbert series of $Q_m$ in characteristic $p$ differs from that in characteristic zero and to compute the Hilbert series explicitly in such cases. We also describe some partial progress we made in this direction using the Opdam shift operator. In Section \ref{sec:5} we discuss some preliminary results and conjectures about the Hilbert series of $q$-deformed quasi-invariants, particularly in the case $S_3$.

\section{Quasi-invariant polynomials for $S_n$}\label{sec:2}

We start with the definition of quasi-invariant polynomials. Let $\k$ be a field, and let $S_n$ act on $\k[x_1,\dots,x_n]$ by permuting the variables. Let $s_{i,j}\in S_n$ denote the transposition swapping the $i$-th and $j$-th indices.

\begin{definition}
    Let $n$ be a positive integer. A polynomial $P\in\k[x_1,\dots,x_n]$ is $m$-quasi-invariant if for all transpositions $s_{i,j}\in S_n$, $(1-s_{i,j})P$ is divisible by $(x_i-x_j)^{2m+1}$. We denote by $Q_m(n,\k)$ the space of all $m$-quasi-invariants in $\k[x_1,\dots,x_n]$. 
\end{definition}

Let $\k[x_1,\dots,x_n]^{S_n}$ denote the space of polynomials in $\k[x_1,\dots,x_n]$ which are invariant with respect to the action of $S_n$. Some useful properties of $Q_m(n,\k)$ are described in the following proposition. 

\begin{proposition}[\cite{etingof2002lectures}]\label{prop:2.2}
    \phantom{.}

    \begin{enumerate}
        \item $\k[x_1,\dots,x_n]^{S_n}\subset Q_m(n,\k)$, $Q_0(n,\k)=\k[x_1,\dots,x_n]$, $Q_m(n,\k)\subset Q_{m'}(n,\k)$ if $m>m'$.
        \item $Q_m(n,\k)$ is a ring.
        \item $Q_m(n,\k)$ is a finitely generated module over $\k[x_1,\dots,x_n]^{S_n}$.
    \end{enumerate}
\end{proposition}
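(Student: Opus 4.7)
The plan is to treat each of the three parts in order. Parts (1) and (2) follow directly from the definition, while part (3) reduces to a standard Noetherian-module argument; the main work is in assembling the pieces correctly over an arbitrary field $\k$.

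For part (1), I would first observe that if $P\in\k[x_1,\dots,x_n]^{S_n}$ then $(1-s_{i,j})P=0$, which is divisible by $(x_i-x_j)^{2m+1}$ for every $m$, giving the first containment. For $Q_0(n,\k)=\k[x_1,\dots,x_n]$, note that for any polynomial $P$ the difference $(1-s_{i,j})P$ vanishes on the hyperplane $x_i=x_j$ and is therefore divisible by $(x_i-x_j)=(x_i-x_j)^{2\cdot 0+1}$. The containment $Q_m\subset Q_{m'}$ for $m>m'$ is immediate from the factorization $(x_i-x_j)^{2m+1}=(x_i-x_j)^{2(m-m')}\cdot(x_i-x_j)^{2m'+1}$.

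For part (2), additive closure and closure under scalar multiplication are immediate from linearity of $1-s_{i,j}$. For closure under multiplication I would use the Leibniz-type identity
\[
(1-s_{i,j})(PP')\;=\;P\cdot(1-s_{i,j})P'\;+\;\bigl((1-s_{i,j})P\bigr)\cdot s_{i,j}(P'),
\]
so that each summand on the right inherits divisibility by $(x_i-x_j)^{2m+1}$ from $P$ or $P'$, giving $PP'\in Q_m(n,\k)$.

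For part (3), I would combine three standard ingredients. First, by Newton's theorem the invariant ring $\k[x_1,\dots,x_n]^{S_n}$ is the polynomial ring $\k[e_1,\dots,e_n]$ in the elementary symmetric functions, hence Noetherian by Hilbert's basis theorem. Second, each $x_i$ satisfies the monic polynomial $\prod_{j=1}^n(T-x_j)\in\k[x_1,\dots,x_n]^{S_n}[T]$, so $\k[x_1,\dots,x_n]$ is integral, and being also finitely generated as an algebra over $\k[x_1,\dots,x_n]^{S_n}$, it is a finitely generated module. Third, by part (1) the space $Q_m(n,\k)$ is a $\k[x_1,\dots,x_n]^{S_n}$-submodule of $\k[x_1,\dots,x_n]$, and submodules of finitely generated modules over a Noetherian ring are finitely generated. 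The step I expect to be the most delicate is verifying that these ingredients all remain valid in arbitrary characteristic; however, each of Newton's theorem, the integrality of $x_i$, and Hilbert's basis theorem is characteristic-free, so no genuine obstruction arises.
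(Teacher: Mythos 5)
Your proof is correct and is essentially the standard argument; the paper itself states this proposition without proof, citing the reference, and the reasoning there is the same (the Leibniz-type identity for part (2), and Noetherianity of $\k[x_1,\dots,x_n]^{S_n}$ plus integrality of the $x_i$ for part (3)). The only cosmetic point is that the submodule claim in part (3) really uses parts (1) and (2) together --- or the direct identity $(1-s_{i,j})(fP)=f\,(1-s_{i,j})P$ for symmetric $f$ --- rather than part (1) alone.
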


Note that $Q_m(n,\k)$ admits a grading by the degrees of the polynomials. Thus we may define its Hilbert series.

\begin{definition}
    The Hilbert series of $Q_m(n,\k)$ is
    $$H_m(t)=\sum_{d\geq 0}\dim Q_m(n,\k)[d]t^d$$
    where $Q_m(n,\k)[d]$ is the graded component of $Q_m(n,\k)$ consisting of polynomials of homogeneous degree $d$.
\end{definition}

Note that by the fundamental theorem of symmetric polynomials, $\k[x_1,\dots,x_n]^{S_n}$ is a free polynomial algebra generated by symmetric polynomials $P_1,\dots,P_n$ of degrees $1,\dots,n$, respectively \cite{humphreys_1990}. Thus since $Q_m(n,\k)$ is a finitely generated module over $\k[x_1,\dots,x_n]^{S_n}=\k[P_1,\dots,P_n]$ by Proposition \ref{prop:2.2}, we can write
$$H_m(t)=\frac{G_m(t)}{\prod_{d=1}^n(1-t^d)}$$
where $G_m(t)$ is a polynomial by Hilbert's syzygy theorem. We say that $G_m(t)$ is the Hilbert polynomial associated with $H_m(t)$. Note that describing the Hilbert polynomial is related to describing the generators and relations of $Q_m(n,\k)$ as a $\k[x_1,\dots,x_n]^{S_n}$-module.

From now on, we will assume the characteristic of $\k$ does not divide $n!$, so that the representation theory of $S_n$ over $\k$ is non-modular. The following lemma and proposition are standard results in the study of quasi-invariants, for example they a direct consequence of the decomposition of $Q_m(n,\C)$ as a representation of the spherical rational Cherednik algebra (see \cite{berest2003cherednik}) in the case $\k=\C$. 

\begin{lemma}\label{lem:2.4}
    $Q_m(n,\k)$ is a representation of $S_n$.
\end{lemma}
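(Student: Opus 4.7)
The plan is to show that $Q_m(n,\k)$ is stable under the permutation action of $S_n$ on $\k[x_1,\dots,x_n]$, which immediately gives it the structure of an $S_n$-representation. Concretely, I want to take $P\in Q_m(n,\k)$ and $\sigma\in S_n$ and verify that $\sigma P$ again satisfies the divisibility condition from the definition for every transposition $s_{i,j}$.

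The key observation is the conjugation identity $s_{i,j}\sigma=\sigma\, s_{\sigma^{-1}(i),\sigma^{-1}(j)}$ in $S_n$, which follows directly from how transpositions behave under conjugation. Using this,
\[
(1-s_{i,j})\sigma P \;=\; \sigma P - \sigma\, s_{\sigma^{-1}(i),\sigma^{-1}(j)}P \;=\; \sigma\bigl((1-s_{\sigma^{-1}(i),\sigma^{-1}(j)})P\bigr).
\]
Since $P$ is $m$-quasi-invariant, the inner expression is divisible by $(x_{\sigma^{-1}(i)}-x_{\sigma^{-1}(j)})^{2m+1}$; write it as $(x_{\sigma^{-1}(i)}-x_{\sigma^{-1}(j)})^{2m+1}R$ for some $R\in\k[x_1,\dots,x_n]$. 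Applying $\sigma$ (which acts on $\k[x_1,\dots,x_n]$ by the ring automorphism $x_k\mapsto x_{\sigma(k)}$) sends the factor $(x_{\sigma^{-1}(i)}-x_{\sigma^{-1}(j)})^{2m+1}$ to $(x_i-x_j)^{2m+1}$, so
\[
(1-s_{i,j})\sigma P \;=\; (x_i-x_j)^{2m+1}\,(\sigma R),
\]
which is divisible by $(x_i-x_j)^{2m+1}$ as required. Thus $\sigma P\in Q_m(n,\k)$, and since $\sigma$ and $s_{i,j}$ were arbitrary, $Q_m(n,\k)$ is closed under the $S_n$-action.

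There is no real obstacle here; the only subtle point is being careful about the conjugation formula $s_{i,j}\sigma=\sigma\, s_{\sigma^{-1}(i),\sigma^{-1}(j)}$ and the fact that the ring automorphism $\sigma$ carries $(x_a-x_b)^{2m+1}$ to $(x_{\sigma(a)}-x_{\sigma(b)})^{2m+1}$, which reshuffles the divisibility condition correctly.
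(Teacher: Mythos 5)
Your proof is correct. The conjugation identity $s_{i,j}\sigma=\sigma\,s_{\sigma^{-1}(i),\sigma^{-1}(j)}$ is right for the left action $x_k\mapsto x_{\sigma(k)}$, and applying $\sigma$ to the factorization $(1-s_{\sigma^{-1}(i),\sigma^{-1}(j)})P=(x_{\sigma^{-1}(i)}-x_{\sigma^{-1}(j)})^{2m+1}R$ does exactly what you claim, so $\sigma P$ satisfies the divisibility condition for every transposition. The paper does not actually write out a proof --- it records the lemma as a standard fact, noting that over $\C$ it follows from the decomposition of $Q_m(n,\C)$ as a representation of the spherical rational Cherednik algebra. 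Your direct verification is the elementary argument that works over any field and is the one implicitly intended; nothing is missing.
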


Now, consider a graded component $Q_m(n,\k)[d]$ of the quasi-invariants. It is a subrepresentation of $Q_m(n,\k)$, and moreover it is finite dimensional since it is a subspace of the finite dimensional space $\k[x_1,\dots,x_n][d]$. Thus, since we are working in the non-modular case, by Maschke's theorem it decomposes as a direct sum
$$Q_m(n,\k)[d]=\bigoplus_{\tau\in\Irrep(S_n)}Q_m(n,\k)[d]_\tau$$
where $\Irrep(S_n)$ is the set of irreducible representations of $S_n$ and $Q_m(n,\k)[d]_\tau$ is the subspace of $Q_m(n,\k)[d]$ on which $S_n$ acts by a direct sum of copies of $\tau$. Then for $\tau\in\Irrep(S_n)$, let us define
$$Q_m(n,\k)_\tau=\bigoplus_{d\geq 0}Q_m(n,\k)[d]_\tau$$
to be the isotypic components of $Q_m(n,\k)$.

\begin{proposition}\label{prop:2.5}
\begin{enumerate}
    \item $Q_m(n,\k)_\tau$ is a module over $\k[x_1,\dots,x_n]^{S_n}$.\label{prop:2.51}
    \item We have the decomposition
    $$Q_m(n,\k)=\bigoplus_{\tau\in\Irrep(S_n)}Q_m(n,\k)_\tau$$
    as a $\k$-vector space and hence as a $\k[x_1,\dots,x_n]^{S_n}$-module as well.\label{prop:2.52}
\end{enumerate}
\end{proposition}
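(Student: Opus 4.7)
The plan is to reduce both parts to the single observation that, for a symmetric polynomial $f\in\k[x_1,\dots,x_n]^{S_n}$, the map $\mu_f:Q_m(n,\k)\to Q_m(n,\k)$ given by $P\mapsto fP$ is $S_n$-equivariant. This is because, for any $\sigma\in S_n$, we have $\sigma(fP)=\sigma(f)\sigma(P)=f\,\sigma(P)$, so $\mu_f$ commutes with the $S_n$-action. Note that $\mu_f$ indeed has image in $Q_m(n,\k)$: by Proposition \ref{prop:2.2}, $Q_m(n,\k)$ is a ring containing $\k[x_1,\dots,x_n]^{S_n}$, so $fP\in Q_m(n,\k)$ whenever $P\in Q_m(n,\k)$.

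For part \ref{prop:2.51}, I would then argue as follows. The map $\mu_f$ is graded of degree $\deg f$, i.e.\ it sends $Q_m(n,\k)[d]$ into $Q_m(n,\k)[d+\deg f]$, and is $S_n$-equivariant. Since the isotypic decomposition of a finite-dimensional representation is characterized by the $S_n$-equivariant projections onto each isotypic component, any $S_n$-equivariant map between finite-dimensional $S_n$-representations must carry the $\tau$-isotypic component of the source into the $\tau$-isotypic component of the target. Applying this to $\mu_f:Q_m(n,\k)[d]\to Q_m(n,\k)[d+\deg f]$ gives $\mu_f(Q_m(n,\k)[d]_\tau)\subseteq Q_m(n,\k)[d+\deg f]_\tau$. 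Summing over $d$ shows that $Q_m(n,\k)_\tau$ is stable under multiplication by every symmetric polynomial, which gives the module structure.

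For part \ref{prop:2.52}, I would just assemble the graded decomposition. By construction, for each $d$ we already have the decomposition
$$Q_m(n,\k)[d]=\bigoplus_{\tau\in\Irrep(S_n)}Q_m(n,\k)[d]_\tau$$
as a $\k$-vector space, guaranteed by Maschke's theorem in the non-modular setting. Swapping the two direct sums gives
$$Q_m(n,\k)=\bigoplus_{d\geq 0}\bigoplus_{\tau\in\Irrep(S_n)}Q_m(n,\k)[d]_\tau=\bigoplus_{\tau\in\Irrep(S_n)}\bigoplus_{d\geq 0}Q_m(n,\k)[d]_\tau=\bigoplus_{\tau\in\Irrep(S_n)}Q_m(n,\k)_\tau$$
as $\k$-vector spaces. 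That this is also a decomposition of $\k[x_1,\dots,x_n]^{S_n}$-modules is immediate from part \ref{prop:2.51}: each summand is already a submodule, so the direct sum of vector spaces is automatically a direct sum of modules.

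The main thing to be careful about is really just the equivariance argument in part \ref{prop:2.51}; once that is in place, everything else is formal manipulation of direct sums. No serious obstacle is expected since the non-modular hypothesis on $\char\k$ guarantees Maschke's theorem applies, which is the only nontrivial representation-theoretic input.
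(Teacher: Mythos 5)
Your proof is correct; the paper itself omits a proof of this proposition (citing it as a standard consequence of known results), and your argument --- multiplication by a symmetric polynomial is an $S_n$-equivariant graded endomorphism of $Q_m(n,\k)$, hence preserves isotypic components in the non-modular setting, plus Maschke's theorem on each graded piece --- is exactly the standard one. The only pedantic point is that ``$\mu_f$ is graded of degree $\deg f$'' presumes $f$ homogeneous, but this is harmless since every symmetric polynomial is a sum of homogeneous ones and stability under each homogeneous piece suffices.
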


By the above proposition, to study the generators and relations of $Q_m(n,\k)$ it suffices to study the generators and relations of $Q_m(n,\k)_\tau$. This will be our primary method of studying $Q_m(n,\k)$ throughout the rest of this paper. Let us denote by triv and sign the trivial and sign representations of $S_n$, respectively. We deal with these two cases here.

\begin{proposition}\label{prop:2.6} Assume $\char\k\neq 2$. As $\k[x_1,\dots,x_n]^{S_n}$-modules, we have
\begin{enumerate}
    \item $Q_m(n,\k)_\triv$ is freely generated by $1$.\label{prop:2.61}
    \item $Q_m(n,\k)_\sgn$ is freely generated by $\prod_{i<j}(x_i-x_j)^{2m+1}$.\label{prop:2.62}
\end{enumerate}
\end{proposition}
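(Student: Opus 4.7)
The plan is to identify each isotypic component explicitly as a module over $\k[x_1,\dots,x_n]^{S_n}$ and then read off the generator.

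For part (1), the key observation is that the trivial isotypic component equals $Q_m(n,\k)\cap \k[x_1,\dots,x_n]^{S_n}$, which in turn equals all of $\k[x_1,\dots,x_n]^{S_n}$, since for any symmetric $P$ one has $(1-s_{i,j})P=0$, trivially divisible by $(x_i-x_j)^{2m+1}$. Hence $Q_m(n,\k)_\triv = \k[x_1,\dots,x_n]^{S_n}$, which is freely generated by $1$ over itself.

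For part (2), I would first translate membership in the sign isotypic component into a concrete condition: $P\in Q_m(n,\k)_\sgn$ iff $s_{i,j}P=-P$ for every transposition (so $P$ is antisymmetric) and $(1-s_{i,j})P=2P$ is divisible by $(x_i-x_j)^{2m+1}$. Because $\char\k\neq 2$, this forces $(x_i-x_j)^{2m+1}$ to divide $P$ for each $i<j$. Since the linear forms $x_i-x_j$ are pairwise coprime in the UFD $\k[x_1,\dots,x_n]$, their powers divide $P$ simultaneously, giving $\delta^{2m+1}\mid P$ where $\delta=\prod_{i<j}(x_i-x_j)$. Writing $P=\delta^{2m+1}R$, note that $\delta$ is antisymmetric and $2m+1$ is odd, so $\delta^{2m+1}$ is antisymmetric; as both $P$ and $\delta^{2m+1}$ are antisymmetric, the quotient $R$ is $S_n$-invariant. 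Conversely, for any $R\in \k[x_1,\dots,x_n]^{S_n}$ the product $\delta^{2m+1}R$ is antisymmetric and every $(1-s_{i,j})(\delta^{2m+1}R)=2\delta^{2m+1}R$ is visibly divisible by $(x_i-x_j)^{2m+1}$, so it lies in $Q_m(n,\k)_\sgn$. This yields the module isomorphism $Q_m(n,\k)_\sgn=\delta^{2m+1}\cdot \k[x_1,\dots,x_n]^{S_n}$, and freeness of $\delta^{2m+1}$ as a generator follows from $\k[x_1,\dots,x_n]$ being an integral domain.

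The only non-cosmetic step is passing from ``$(x_i-x_j)^{2m+1}\mid P$ for each $i<j$'' to ``$\delta^{2m+1}\mid P$,'' handled by pairwise coprimality of the linear forms in the UFD; and noting that the characteristic hypothesis $\char\k\neq 2$ is used precisely to invert the factor of $2$ appearing in $(1-s_{i,j})P=2P$ on the antisymmetric piece. Both $S_n$-representations $\triv$ and $\sgn$ are one-dimensional, so no multiplicity issues arise and the generators produced are genuine module generators, not just spanning sets.
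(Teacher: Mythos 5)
Your proof is correct and follows essentially the same route as the paper's: identify $Q_m(n,\k)_\triv$ with the full ring of symmetric polynomials, and show the sign component consists exactly of symmetric multiples of $\prod_{i<j}(x_i-x_j)^{2m+1}$ via divisibility by each $(x_i-x_j)^{2m+1}$ and antisymmetry of the quotient. Your explicit mention of pairwise coprimality of the linear forms in the UFD is a small clarification of a step the paper leaves implicit, but the argument is otherwise the same.
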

\begin{proof}
    \ref{prop:2.61}~Note that the statement that $S_n$ acts by the trivial representation on $P$ is equivalent to the statement that $P$ is $S_n$-invariant, so $Q_m(n,\k)_\triv\subset\k[x_1,\dots,x_n]^{S_n}$. On the other hand, for any $P\in\k[x_1,\dots,x_n]^{S_n}$ and any $s_{i,j}\in S_n$ we have $(1-s_{i,j})P=P-P=0$ is divisible by $(x_i-x_j)^{2m+1}$, so $Q_m(n,\k)_\triv=\k[x_1,\dots,x_n]^{S_n}$.
    
    \ref{prop:2.62}~Let $P\in Q_m(n,\k)_\sgn$ and let $s_{i,j}\in S_n$. Then since $P$ is in the sign representation we have $s_{i,j}P=-P$, so $(1-s_{i,j})P=2P$ is divisible by $(x_i-x_j)^{2m+1}$. So $P$ is divisible by $(x_i-x_j)^{2m+1}$ for all $i$ and $j$, and thus it is divisible by $\prod_{i<j}(x_i-x_j)^{2m+1}$. 
    
    Let us write $P=K\prod_{i<j}(x_i-x_j)^{2m+1}$ for some polynomial $K$. Note that 
    $$s_{i',j'}\prod_{i<j}(x_i-x_j)^{2m+1}=-\prod_{i<j}(x_i-x_j)^{2m+1}$$
    for all $s_{i',j'}\in S_n$, so we have 
    $$s_{i',j'}P=-P=-K\prod_{i<j}(x_i-x_j)^{2m+1}=s_{i',j'}K\bigg(s_{i',j'}\prod_{i<j}(x_i-x_j)^{2m+1}\bigg)=-s_{i',j'}K\prod_{i<j}(x_i-x_j)^{2m+1}$$
    so $s_{i',j'}K=K$. Since $S_n$ is generated by transpositions, $K$ is $S_n$-invariant, and thus $P$ is in the $\k[x_1,\dots,x_n]^{S_n}$-module generated by $\prod_{i<j}(x_i-x_j)^{2m+1}$. Conversely, for any $K\in\k[x_1,\dots,x_n]^{S_n}$ and $s_{i',j'}\in S_n$, we have 
    $$s_{i',j'}\Big(K\prod_{i<j}(x_i-x_j)^{2m+1}\Big)=-K\prod_{i<j}(x_i-x_j)^{2m+1}$$
    and $$(1-s_{i',j'})\Big(K\prod_{i<j}(x_i-x_j)^{2m+1}\Big)=2K\prod_{i<j}(x_i-x_j)^{2m+1}$$ is divisible by $(x_{i'}-x_{j'})^{2m+1}$, so the $\k[x_1,\dots,x_n]^{S_n}$-module generated by $\prod_{i<j}(x_i-x_j)^{2m+1}$ is contained in $Q_m(n,\k)_\sgn$. So $Q_m(n,\k)_\sgn$ is exactly this module. Freeness then follows from the fact that $\k[x_1,\dots,x_n]$ is an integral domain.
\end{proof}

\begin{remark}\label{rem:2.7}
    It was proved in \cite{Ren_2020} that the Hilbert series of $Q_m(2,\k)$ is $\frac{1+t^{2m+1}}{(1-t)(1-t^2)}$, regardless of the characteristic of $\k$. Proposition \ref{prop:2.6} provides an alternate proof of this result in the case $\char \k\neq 2$. Namely, note that the only irreducible representations of $S_2$ are triv and sign. Thus the generators of $Q_m(2,\k)$ are 1 (of degree 0) and $(x_1-x_2)^{2m+1}$ (of degree $2m+1$). Since these generators belong to different irreducible representations of $S_2$, there are no relations between them, and thus the Hilbert polynomial is $1+t^{2m+1}$ and the Hilbert series is $\frac{1+t^{2m+1}}{(1-t)(1-t^2)}$.
\end{remark}

\section{The Hilbert series of $Q_m(3,\k)$}\label{sec:3}

The Hilbert series of $Q_m(3,\C)$ was computed in \cite{felder2003action} and is as follows:
\begin{equation}\label{eq:3.1}
    H_m(t)=\frac{1+2t^{3m+1}+2t^{3m+2}+t^{6m+3}}{(1-t)(1-t^2)(1-t^3)}.\tag{3.1}
\end{equation}

In this section, we work toward an explicit description of the Hilbert series of $Q_m(3,\k)$ in characteristic $p>3$. Note that there are three irreducible representations of $S_3$: triv, sign, and the 2-dimensional standard representation which we will denote by std. Proposition \ref{prop:2.6} computes the generators of $Q_m(3,\k)_\triv$ and $Q_m(3,\k)_\sgn$, so to compute the Hilbert series of $Q_m(3,\k)$ we only need to consider the space $Q_m(3,\k)_\std$. Both this section and the next section will be largely focused on understanding this space.

This section is dedicated to proving the main result of this paper, which is the following theorem, conjectured in \cite{Ren_2020}. Note the similarity to (\ref{eq:3.1}).

\begin{theorem}\label{thm:3.1}
    Let $\k$ be a field of characteristic $p>3$. Then $Q_m(3,\k)_\std$ is freely generated by standard representations of degree $d$ and $6m+3-d$ where $d$ is an integer depending on $m,p$ that satisfies $2m+1\leq d\leq 3m+1$. In particular, the Hilbert series of $Q_m(3,\k)$ is
    $$H_m(t)=\frac{1+2t^d+2t^{6m+3-d}+t^{6m+3}}{(1-t)(1-t^2)(1-t^3)}.$$
    
\end{theorem}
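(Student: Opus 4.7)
The plan is to use Propositions~\ref{prop:2.5} and~\ref{prop:2.6} to reduce the theorem to the isotypic component $Q_m(3,\k)_\std$, which is the only one not yet described. Since $\std$ is $2$-dimensional, the claimed Hilbert series will follow once I show that $Q_m(3,\k)_\std$ is free over $\k[P_1,P_2,P_3]$ with exactly two $\std$-generators, in degrees $d$ and $6m+3-d$, and that $2m+1 \le d \le 3m+1$.

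The central step is the freeness. I would work with the module $M := \mathrm{Hom}_{S_3}(\std, Q_m(3,\k))$ over $\k[P_1,P_2,P_3]$; the claimed structure of $Q_m(3,\k)_\std$ is equivalent to $M$ being free of rank $2$. Because $\char \k > 3$, the Reynolds projector realizes $M$ as a direct summand of $\mathrm{Hom}_{S_3}(\std, \k[x_1,x_2,x_3])$, which is free of rank $2$ by Chevalley--Shephard--Todd ($\std$ appears in the coinvariant algebra of $S_3$ in degrees $1$ and $2$). Inverting the discriminant $\delta = \prod_{i<j}(x_i-x_j)$ shows that the generic rank of $M$ is already $2$, so the real content is freeness versus mere reflexivity. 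I would attempt this by establishing that $Q_m(3,\k)$ is Cohen--Macaulay over $\k[P_1,P_2,P_3]$ and then invoking Auslander--Buchsbaum; for $n=3$ this should be tractable by directly exhibiting a regular sequence of length three in $Q_m(3,\k)$, adapting the characteristic-zero argument via the spherical rational Cherednik algebra or via the Opdam shift discussed later in the paper.

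With freeness in hand, label the two $\std$-generators in degrees $d_1 \le d_2$. The identity $d_1 + d_2 = 6m+3$ should follow from a Gorenstein/palindromicity argument. Since $\std \otimes \std \cong \triv \oplus \sgn \oplus \std$ as $S_3$-representations and $\std \otimes \sgn \cong \std$, multiplication in $\k[x_1,x_2,x_3]$ descends to an $S_3$-equivariant symmetric pairing $Q_m(3,\k)_\std \otimes Q_m(3,\k)_\std \to Q_m(3,\k)_\sgn = \delta^{2m+1}\cdot\k[P_1,P_2,P_3]$. I would verify that this pairing is perfect (i.e., nondegenerate after localization), inducing a graded isomorphism between $Q_m(3,\k)_\std$ and its $\k[P_1,P_2,P_3]$-linear dual twisted by $\sgn$; matching the two generators against their duals, whose degrees are $6m+3-d_1$ and $6m+3-d_2$, forces $\{d_1,d_2\} = \{6m+3-d_1, 6m+3-d_2\}$ and hence $d_1 + d_2 = 6m+3$.

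The two bounds on $d := d_1$ are then short. Since $d_1 + d_2 = 6m+3$ is odd and $d_1 \le d_2$, we immediately get $d \le 3m+1$. For the lower bound, a nonzero $\std$-generator $P$ must satisfy $(1-s_{i,j})P \ne 0$ for some transposition $s_{i,j}$ (otherwise $P \in Q_m(3,\k)_\triv$), and then by quasi-invariance $(x_i-x_j)^{2m+1}$ divides $(1-s_{i,j})P$, so $d = \deg P \ge \deg((1-s_{i,j})P) \ge 2m+1$. I expect the main obstacle to be the freeness/Cohen--Macaulayness step: the characteristic-zero proof using the rational Cherednik algebra does not transfer cleanly to positive characteristic, so the argument will likely need to be tailored to $n=3$, either by producing an explicit regular sequence or by working out a minimal generating set of $Q_m(3,\k)_\std$ directly and checking that the resulting syzygies vanish.
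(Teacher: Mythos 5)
Your reduction to the $\std$-isotypic component, the rank computation, and the two degree bounds (given $d_1+d_2=6m+3$) are all fine and match the paper's framing. But the two steps you flag as "to be verified" are precisely the content of the theorem, and the methods you propose for them do not close the gap. First, freeness: $M=\mathrm{Hom}_{S_3}(\std,Q_m(3,\k))$ is a \emph{submodule} of the rank-$2$ free module $\mathrm{Hom}_{S_3}(\std,\k[x_1,x_2,x_3])$, not a direct summand --- the Reynolds/isotypic projector splits off $Q_m(3,\k)_\std$ inside $Q_m(3,\k)$, but $Q_m(3,\k)$ itself is only a submodule of $\k[x_1,x_2,x_3]$, so all you get for free is torsion-freeness. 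Cohen--Macaulayness of $Q_m$ in characteristic $p$ is exactly one of the conjectures of Ren and Xu being proved here; it does not follow from the spherical Cherednik algebra argument (which is characteristic-zero) and you give no substitute. Second, the duality step: "perfect" and "nondegenerate after localization at $\delta$" are not the same thing, and the difference is the whole point. Nondegeneracy after inverting $\delta$ only shows that the Gram determinant of the pairing equals $Q\cdot\delta^{2m+1}$ for some nonzero symmetric $Q$, which gives $d_1+d_2\geq 6m+3$; equality (i.e.\ $Q$ a unit, i.e.\ perfection) is equivalent to the statement you are trying to prove. So as written the argument is circular at its crux.

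For comparison, the paper gets both missing pieces simultaneously by induction on $m$. The inequality $d_1+d_2\geq 6m+3$ is Lemma \ref{lem:3.4} (your "nondegeneracy" direction, proved by a factorization argument showing $As_{2,3}B-Bs_{2,3}A\neq 0$ for distinct generating representations); Lemma \ref{lem:3.5} shows that once two generators with $\deg V+\deg W=6m+3$ exist, they freely generate $Q_m(3,\k)_\std$; and the existence of such a pair in $Q_{m+1}$ is manufactured from the pair in $Q_m$ via Lemma \ref{lem:3.6} (adjusting one generator to lie in $Q_{m+1}$) together with multiplication by $\prod_{i<j}(x_i-x_j)^2$, using the Ren--Xu bound that the characteristic-$p$ Hilbert series dominates the characteristic-$0$ one to control the degrees. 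If you want to salvage your outline, you need an independent proof of either Cohen--Macaulayness or perfection of the pairing in characteristic $p$; the explicit-regular-sequence route you mention is not developed and is not obviously easier than the paper's induction.
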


To start, let $V$ be a copy of the standard representation, and let $s_{i,j}\in S_3$. Then $V$ contains a one dimensional $-1$ eigenspace of $s_{i,j}$. Let us denote this eigenspace by $V_{i,j}^-$. Then we have

\begin{lemma}\label{lem:3.2}
    Let $V\subset Q_m(3,\k)_\std$ be a copy of the standard representation, and let $P\in V_{i,j}^-$. Then we have $P+sP+s^2P$=0 where $s=(1\,2\,3)\in S_3$ and $P=(x_i-x_j)^{2m+1}K$ for some polynomial $K$ that is invariant under the action of $s_{i,j}$. Conversely, let $K$ be an $s_{1,2}$-invariant polynomial such that 
    $$(x_1-x_2)^{2m+1}K+(x_2-x_3)^{2m+1}sK+(x_3-x_1)^{2m+1}s^2K=0.$$
    Then $(x_1-x_2)^{2m+1}K$ either belongs to $Q_m(3,\k)_\sgn$ or the $-1$ eigenspace of $s_{1,2}$ in some copy of std inside $Q_m(3,\k)_\std$.
\end{lemma}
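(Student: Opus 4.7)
The statement has two halves, which I treat in sequence. The forward direction rests on the fact that on the two-dimensional standard representation of $S_3$ the operator $1+s+s^2$ is identically zero: since $s$ has order $3$ and trace $-1$ on $\std$, its minimal polynomial divides $x^2+x+1$, so $P+sP+s^2P=0$ for any $P$ in a copy of $\std$. For the factorization, the hypothesis $P\in V_{i,j}^-$ gives $(1-s_{i,j})P=2P$; by the quasi-invariance of $P$ this is divisible by $(x_i-x_j)^{2m+1}$, and since $\k$ has characteristic $\neq 2$ we may cancel the $2$ to obtain $P=(x_i-x_j)^{2m+1}K$. Applying $s_{i,j}$ to this factorization and using $s_{i,j}\bigl((x_i-x_j)^{2m+1}\bigr)=-(x_i-x_j)^{2m+1}$ together with $s_{i,j}P=-P$ then forces $s_{i,j}K=K$.

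For the converse, set $P:=(x_1-x_2)^{2m+1}K$; the main task is to verify that $P$ is $m$-quasi-invariant. The divisibility of $(1-s_{1,2})P=2P$ by $(x_1-x_2)^{2m+1}$ is immediate from the factorization. For $s_{2,3}$, the plan is to reduce modulo $(x_2-x_3)^{2m+1}$: the middle term of the given identity is divisible by this ideal, so
$$P \equiv -(x_3-x_1)^{2m+1}\, s^2 K \pmod{(x_2-x_3)^{2m+1}}.$$
Since $s_{2,3}$ preserves this ideal, applying it to both sides gives $s_{2,3}P \equiv (x_1-x_2)^{2m+1}\, s_{2,3}(s^2 K)$, and the direct substitution $(s^2K)(x_1,x_2,x_3)=K(x_3,x_1,x_2)$ combined with $s_{1,2}$-invariance of $K$ collapses $s_{2,3}(s^2K)$ to $K$; thus $s_{2,3}P\equiv P$ and $(1-s_{2,3})P$ has the required divisibility. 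The case of $s_{1,3}$ runs in parallel, working modulo $(x_1-x_3)^{2m+1}$, dropping the third term of the identity, and checking $s_{1,3}(sK)=K$ by the same substitution trick.

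With $P\in Q_m(3,\k)$ established, the isotypic conclusion is quick. Because $K$ is $s_{1,2}$-invariant and $(x_1-x_2)^{2m+1}$ is $s_{1,2}$-antiinvariant, we have $s_{1,2}P=-P$, so the trivial isotypic component of $P$ vanishes and $P$ lies in $Q_m(3,\k)_\sgn$ together with the $-1$ eigenspace of $s_{1,2}$ inside $Q_m(3,\k)_\std$. Moreover, applying the identity $P+sP+s^2P=0$ and noting that $1+s+s^2$ acts as $3$ on the sign representation (and as $0$ on $\std$) forces the sign component of $P$ to vanish in characteristic $>3$, so in fact $P$ lies entirely in the $-1$ eigenspace of $s_{1,2}$ in the cyclic $\std$-submodule it generates. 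The main obstacle is really just the algebraic bookkeeping in the converse step: correctly applying $s_{2,3}$ and $s_{1,3}$ to the surviving terms of the identity, and using the $s_{1,2}$-invariance of $K$ to identify $s_{2,3}(s^2K)$ and $s_{1,3}(sK)$ with $K$ itself.
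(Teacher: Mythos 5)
Your proof is correct and follows essentially the same route as the paper: the factorization $P=(x_i-x_j)^{2m+1}K$ from $(1-s_{i,j})P=2P$, verification of quasi-invariance under $s_{1,3}$ and $s_{2,3}$ via the three-term identity together with $s_{2,3}s^2=s_{1,3}s=s_{1,2}$, and the isotypic analysis using $s_{1,2}P=-P$. The only cosmetic differences are that you phrase the divisibility check as a congruence modulo $(x_i-x_j)^{2m+1}$ where the paper writes the exact identities, and you pin down the sign component via $1+s+s^2$ acting as $3$, where the paper argues from the dimension of the $S_3$-orbit span; both are sound.
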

\begin{proof}
    The proof of the first statement is analogous to the proof of Proposition \ref{prop:2.6}. Since $P\in V_{i,j}^-$, we have $(1-s_{i,j})P=2P$ is divisible by $(x_i-x_j)^{2m+1}$, so $P=(x_i-x_j)^{2m+1}K$ for some polynomial $K$. Then since $P$ and $(x_i-x_j)^{2m+1}$ are both $s_{i,j}$-antiinvariant, $K$ must be $s_{i,j}$-invariant. The fact that $P+sP+s^2P=0$ is a direct consequence of $P$ being an element of the standard representation.
    
    For the second statement, note that since $K$ is $s_{1,2}$-invariant, $sK=s_{1,3}K$ and $s^2K=s_{2,3}K$. So we have
    $$(1-s_{1,3})\Big((x_1-x_2)^{2m+1}K\Big)=(x_1-x_2)^{2m+1}K-(x_3-x_2)^{2m+1}sK=-(x_3-x_1)^{2m+1}s^2K,$$
    $$(1-s_{2,3})\Big((x_1-x_2)^{2m+1}K\Big)=(x_1-x_2)^{2m+1}K-(x_1-x_3)^{2m+1}s^2K=-(x_2-x_3)^{2m+1}sK$$
    so $(x_1-x_2)^{2m+1}K$ is $m$-quasi-invariant. On the other hand, since $s_{1,2}$ sends the vector space spanned by $(x_1-x_2)^{2m+1}K$ to itself and $s_{1,3}$ and $s_{2,3}$ send $(x_1-x_2)^{2m+1}K$ to polynomials in the space spanned by $(x_1-x_2)^{2m+1}K$ and $(x_2-x_3)^{2m+1}sK$, the span of the $S_3$ orbit of $(x_1-x_2)^{2m+1}K$ is at most 2 dimensional. If it is 1 dimensional, then $(x_1-x_2)^{2m+1}K$ is contained in a copy of sign inside $Q_m(3,\k)_\sgn$ since $s_{1,2}$ acts by $-1$. If it is 2 dimensional, then since the only 2 dimensional irreducible representation of $S_3$ is std, $(x_1-x_2)^{2m+1}K$ belongs to some copy of std inside $Q_m(3,\k)_\std$. That it is in the $-1$ eigenspace of $s_{1,2}$ is clear by applying $s_{1,2}$ to $(x_1-x_2)^{2m+1}K$.
\end{proof}

Note that since $Q_m(3,\k)_\std$ is a graded $\k[x_1,x_2,x_3]^{S_3}$-module, we may assume its generators are homogeneous. We may also describe generating representations of $Q_m(3,\k)_\std$, rather than generating elements. 

\begin{corollary}\label{cor:3.3}
    Let $V$ be a generating representation of $Q_m(3,\k)_\std$ and let $P\in V_{i,j}^-$. Let us write $P=(x_i-x_j)^{2m+1}K$. Then $K$ is not divisible by any nonconstant symmetric polynomial.
\end{corollary}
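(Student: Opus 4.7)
The plan is to argue by contradiction. By relabeling we may assume $(i,j) = (1,2)$, so that $s = (1\,2\,3)$ acts as in Lemma~\ref{lem:3.2}. Suppose for contradiction that $K = fK'$ for some nonconstant symmetric polynomial $f$ and some polynomial $K'$. The idea is to apply the converse direction of Lemma~\ref{lem:3.2} to $K'$ in place of $K$: this will exhibit $V$ as contained in the irrelevant submodule $\mathfrak{m} \cdot Q_m(3,\k)_{\std}$, where $\mathfrak{m}$ is the ideal of positive-degree symmetric polynomials, contradicting the assumption that $V$ is part of a minimal generating set.

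First I would verify the hypotheses of Lemma~\ref{lem:3.2} for $K'$. The $s_{1,2}$-invariance of $K'$ follows from that of $K$ together with $s_{1,2}f = f$ and the integral-domain property of $\k[x_1,x_2,x_3]$. The sum identity $(x_1-x_2)^{2m+1}K' + (x_2-x_3)^{2m+1}sK' + (x_3-x_1)^{2m+1}s^2K' = 0$ follows from the analogous identity for $K$ by factoring out $f$, since $sf = s^2f = f$. Applying Lemma~\ref{lem:3.2} then shows that $P' := (x_1-x_2)^{2m+1}K'$ lies either in $Q_m(3,\k)_{\sgn}$ or in some copy of std inside $Q_m(3,\k)_{\std}$.

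In the sign case, $P = fP'$ lies in $Q_m(3,\k)_{\sgn}$; but $P \in V \subseteq Q_m(3,\k)_{\std}$, so by the isotypic decomposition of Proposition~\ref{prop:2.5} we have $P = 0$, contradicting the nontriviality of $V$. In the std case, letting $V'$ be the $S_3$-span of $P'$, we have $V = f V' \subseteq \mathfrak{m} \cdot Q_m(3,\k)_{\std}$, so by graded Nakayama $V$ is redundant in any minimal generating set, again a contradiction.

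The main obstacle is more conceptual than computational: one must interpret ``generating representation'' as occurring in a minimal generating set, so that the inclusion $V \subseteq \mathfrak{m} \cdot Q_m(3,\k)_{\std}$ yields the desired contradiction, and one must keep the sign/std case split tidy. Each individual step is a short factoring argument using the integral-domain property of $\k[x_1,x_2,x_3]$ and the fact that multiplication by a symmetric polynomial preserves $S_3$-isotypic components.
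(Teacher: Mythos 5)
Your proof is correct and follows essentially the same route as the paper's: divide out the symmetric factor, check that the quotient still satisfies the hypotheses of the converse direction of Lemma~\ref{lem:3.2}, and conclude that $P$ is generated by a lower-degree quasi-invariant, contradicting minimality of the generating set. Your treatment is slightly more careful than the paper's in that you explicitly rule out the sign-representation case via the isotypic decomposition and invoke graded Nakayama, but the underlying argument is the same.
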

\begin{proof}
    Since $P$ is in $Q_m(3,\k)_\std$, it satisfies $P+sP+s^2P=0$ where $s=(1\,2\,3)$. Assume for contradiction that $K$ is divisible by some symmetric polynomial $Q$. Then it is clear that \linebreak
    $P/Q+s(P/Q)+s^2(P/Q)=0$ and $P/Q=(x_i-x_j)^{2m+1}(K/Q)$, so $P/Q$ is contained in $Q_m(3,\k)$ by Lemma \ref{lem:3.2}. But then $P$ is in the $\k[x_1,x_2,x_3]^{S_3}$-module generated by $P/Q$, so it is generated by an element of lower degree and thus is not contained in a generating representation.
\end{proof}

For a standard representation $V$ consisting of homogeneous polynomials of degree $d$, let us say its degree is $\deg V=d$. The following lemma makes use of this, and will prove to be useful both in the proof of Theorem \ref{thm:3.1} and in Section \ref{sec:4}.

\begin{lemma}\label{lem:3.4}
    Let $V,W$ be distinct generating representations of $Q_m(3,\k)_\std$. Let $A\in V_{1,2}^-,B\in W_{1,2}^-$. Then $As_{2,3}B-Bs_{2,3}A$ is a nonzero element of $Q_m(3,\k)_\sgn$ and we have $\deg V+\deg W\geq 6m+3$.
\end{lemma}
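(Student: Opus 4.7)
The plan is to first rewrite $A s_{2,3} B - B s_{2,3} A$ in a form amenable to symmetry analysis, then verify it is a sign-isotypic element of $Q_m(3,\k)$, show it is nonzero using Corollary \ref{cor:3.3}, and finally extract the degree bound from Proposition \ref{prop:2.6}. The key preliminary move is a simplification: writing $s = (1\,2\,3)$ and using $s_{2,3} = s_{1,2} s$ together with the conjugation relation $s_{1,2} s s_{1,2} = s^{-1}$ and the identity $A + sA + s^2 A = 0$ from Lemma \ref{lem:3.2}, one computes $s_{2,3} A = s_{1,2}(sA) = -s^{-1} A = -s^2 A = A + sA$, and analogously $s_{2,3} B = B + sB$. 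The $AB$ cross terms then cancel, giving
$$A s_{2,3} B - B s_{2,3} A = A \cdot sB - B \cdot sA.$$

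Next I would verify this element lies in $Q_m(3,\k)_\sgn$. Membership in $Q_m$ is immediate since $Q_m$ is a ring (Proposition \ref{prop:2.2}) stable under $S_3$ (Lemma \ref{lem:2.4}). For antisymmetry, applying $s_{1,2}$ and using $s_{1,2} sB = B + sB$ produces the needed cancellation, and an analogous short computation under $s_{2,3}$ (invoking $s_{1,3} B = -sB$, derived the same way) handles the other generator of $S_3$. The delicate step is nonzeroness. Writing $A = (x_1-x_2)^{2m+1} K_A$ and $B = (x_1-x_2)^{2m+1} K_B$ with $K_A, K_B$ both $s_{1,2}$-invariant (Lemma \ref{lem:3.2}), one finds
$$A \cdot sB - B \cdot sA = (x_1-x_2)^{2m+1}(x_2-x_3)^{2m+1}\bigl(K_A \cdot sK_B - K_B \cdot sK_A\bigr).$$
If this vanishes, then $K_A/K_B = sK_A/sK_B$, so $K_A/K_B$ is $s$-invariant; together with the $s_{1,2}$-invariance inherited from $K_A$ and $K_B$, this makes $K_A/K_B$ an $S_3$-invariant rational function. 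Writing $K_A/K_B = p/q$ in lowest terms with $p, q$ coprime symmetric polynomials (they cannot both be antisymmetric, else their gcd would contain $\prod_{i<j}(x_i-x_j)$), UFD divisibility in $\k[x_1,x_2,x_3]$ yields $p \mid K_A$ and $q \mid K_B$. Corollary \ref{cor:3.3} then forces $p$ and $q$ to be constants, so $A = cB$ for some scalar $c$, forcing $V = W$ and contradicting distinctness.

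For the degree bound, Proposition \ref{prop:2.6} says that $Q_m(3,\k)_\sgn$ is freely generated by $\prod_{i<j}(x_i-x_j)^{2m+1}$, a polynomial of degree $6m+3$, so every nonzero element has degree at least $6m+3$. Since $\deg(A \cdot sB - B \cdot sA) = \deg A + \deg B = \deg V + \deg W$, we conclude $\deg V + \deg W \geq 6m+3$. The main obstacle is the nonvanishing step: the simplification and the antisymmetry verifications are routine bookkeeping in the standard representation, but ruling out $A \cdot sB = B \cdot sA$ genuinely requires the minimality encoded in Corollary \ref{cor:3.3}, which is exactly what prevents $V$ and $W$ from being related by multiplication by a nontrivial symmetric polynomial.
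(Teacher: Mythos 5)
Your proof is correct and follows the same skeleton as the paper's: membership in $Q_m(3,\k)_\sgn$ via the ring property and the sign of the $S_3$-action, factoring out $(x_1-x_2)^{2m+1}$ to reduce nonvanishing to the statement that $K_A\, sK_B = K_B\, sK_A$ forces $K_A$ and $K_B$ to be proportional, and the degree count via Proposition \ref{prop:2.6}. The one place where you genuinely diverge is the crux of that reduction. The paper argues factor by factor: it chases each irreducible factor of $K$ through the $S_3$-orbit, shows it must also divide $L$ except possibly for a single semi-invariant factor transforming by sign, and then identifies that exceptional factor as $(x_1-x_2)(x_2-x_3)(x_3-x_1)$ using Propositions \ref{prop:2.2} and \ref{prop:2.6}. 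You instead observe that $K_A/K_B$ is an $S_3$-invariant rational function, write it in lowest terms $p/q$, note that $p$ and $q$ must be semi-invariants for a common linear character of $S_3$ (necessarily trivial, since the sign character would force the common factor $\prod_{i<j}(x_i-x_j)$), and then kill $p,q$ with Corollary \ref{cor:3.3}. This is cleaner and handles the "sign factor" wrinkle uniformly rather than as a separate case; the paper's version makes the combinatorics of the orbit of a single irreducible factor explicit but is otherwise equivalent. Your preliminary simplification $s_{2,3}A = A + sA$ (from $A + sA + s^2A = 0$) is a harmless normalization the paper skips by working with $s_{2,3}$ directly; the two resulting factorizations, with $(x_2-x_3)^{2m+1}$ versus $(x_1-x_3)^{2m+1}$, agree up to sign. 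The only points worth tightening are the implicit assumptions that $A$ and $B$ are nonzero (so that $K_B \neq 0$ and the rational function is defined) and that proportionality of $A$ and $B$ forces $V=W$ because each of $V_{1,2}^-$, $W_{1,2}^-$ is one-dimensional and generates its copy of $\std$.
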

\begin{proof}
    $As_{2,3}B-Bs_{2,3}A$ is contained in $Q_m(3,\k)$ because the quasi-invariants form a ring by Proposition \ref{prop:2.2}. It is also contained in a sign representation since it spans the representation $\wedge^2\std=\sgn$ inside $V\otimes W$. Thus it is contained in $Q_m(3,\k)_\sgn$, so it remains to show that it is nonzero.
    
    By Lemma \ref{lem:3.2} we can write $A=(x_1-x_2)^{2m+1}K,B=(x_1-x_2)^{2m+1}L$ for $s_{1,2}$-invariant polynomials $K$ and $L$. Then we have
    \begin{align*}
        As_{2,3}B-Bs_{2,3}A&=(x_1-x_2)^{2m+1}K(x_1-x_3)^{2m+1}s_{2,3}L-(x_1-x_2)^{2m+1}L(x_1-x_3)^{2m+1}s_{2,3}K\\
        &=(x_1-x_2)^{2m+1}(x_1-x_3)^{2m+1}(Ks_{2,3}L-Ls_{2,3}K).
    \end{align*}
    So $As_{2,3}B-Bs_{2,3}A$ is zero if and only if $Ks_{2,3}L=Ls_{2,3}K$, or in other words if $Ks_{2,3}L$ is $s_{2,3}$-invariant. Let us assume that $Ks_{2,3}L$ is $s_{2,3}$-invariant. We will show that this implies $K=L$ up to a scalar, which contradicts $V$ and $W$ being distinct.
    
    Decompose $K$ into its irreducible factors, and let $Q$ be such a factor. We will show that $Q$ is also a factor of $L$. Since $Ks_{2,3}L$ is $s_{2,3}$-invariant, $s_{2,3}Q$ must be a factor of either $K$ or $s_{2,3}L$. If it is a factor of $s_{2,3}L$, then $P$ is a factor of $L$, as desired. Otherwise, $s_{2,3}Q$ is a factor of $K$. Then we also have that $s_{1,2}Q$ and $s_{1,2}s_{2,3}Q$ are factors of $K$ since $K$ is $s_{1,2}$-invariant. Then $s_{2,3}s_{1,2}Q$ is a factor of $Ks_{2,3}L$. If it is a factor in $s_{2,3}L$, then note that $s_{1,3}s_{2,3}s_{1,2}Q$ is a factor of $s_{2,3}L$ since $L$ being $s_{1,2}$-invariant implies $s_{2,3}L$ is $s_{1,3}$-invariant. But we have $s_{1,3}s_{2,3}s_{1,2}=s_{2,3}$, so $s_{2,3}Q$ is a factor of $s_{2,3}L$, which implies $Q$ is a factor of $L$, as desired. So we may assume $s_{2,3}s_{1,2}Q$ is a factor of $K$. Then we also have $s_{1,2}s_{2,3}s_{1,2}Q=s_{1,3}Q$ is a factor of $K$ since $K$ is $s_{1,2}$-invariant, and thus $K$ is divisible by $sQ$ for all $s\in S_3$. 
    
    So $K$ is divisible by the least common multiple of all of the $sQ$, which we will denote by $S$. $S_3$ fixes $S$ as a factor of $K$, so it must act on $S$ by scalars. It cannot act on $S$ trivially, as then $S$ would be a symmetric polynomial factor of $K$, and $K$ has no symmetric polynomial factors by Corollary \ref{cor:3.3}. So $S_3$ must act on $S$ by sign. Note also that we can only have one such factor of $K$ on which $S_3$ acts by sign, as otherwise their product would be a symmetric polynomial factor of $K$. So every factor of $K$ is also a factor of $L$ except for possibly a factor in the sign representation. We may apply everything above to $L$ as well to get the same result in the other direction, yielding a bijection between all irreducible factors of $K$ and $L$ except maybe a factor of the sign representation in either case. Since $K$ and $L$ are both $s_{1,2}$-invariant, acting on both of them by $s_{1,2}$ makes it clear that either they both have an extra factor of the sign representation or they both do not.
    
    If $K$ and $L$ do not have an additional factor in the sign representation, then we have a bijection between all irreducible factors of $K$ and $L$, and thus they are equal up to a scalar, as desired. Otherwise, note that by Proposition \ref{prop:2.2}, the sign factors lie in $Q_0(3,\k)$, and by Proposition \ref{prop:2.6} they are symmetric polynomial multiples of $(x_1-x_2)(x_2-x_3)(x_3-x_1)$. But these symmetric polynomial multiples must be 1, as otherwise $K$ and $L$ would be divisible by nonconstant symmetric polynomials, which violates Corollary \ref{cor:3.3}. So the sign factors are both $(x_1-x_2)(x_2-x_3)(x_3-x_1)$. Thus we still have a bijection between the factors of $K$ and $L$, so $K$ and $L$ are equal up to a scalar, as desired.
    
    Note that $As_{2,3}B-Bs_{2,3}A$ has degree $\deg V+\deg W$. Since $Q_m(3,\k)_\sgn$ is generated by a polynomial of degree $6m+3$ by Proposition \ref{prop:2.6}, we have $\deg V+\deg W\geq 6m+3$.
\end{proof}

The above lemma brings us closer to the proof of Theorem \ref{thm:3.1}, however we still need a couple more lemmas.

\begin{lemma}\label{lem:3.5}
    Assume that there exist generating representations $V,W$ of $Q_m(3,\k)_\std$ such that $\deg V+\deg W=6m+3$. Then $Q_m(3,\k)_\std$ is a free module over $\k[x_1,x_2,x_3]^{S_3}$ generated by $V$ and $W$.
\end{lemma}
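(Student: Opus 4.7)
The plan is to show that the natural multiplication map
\[
\phi\colon R\otimes_\k(V\oplus W)\longrightarrow Q_m(3,\k)_\std, \qquad R:=\k[x_1,x_2,x_3]^{S_3},
\]
is an isomorphism of graded $R$-modules, in two steps: injectivity directly from Lemma \ref{lem:3.4}, and a matching Hilbert-series upper bound from a sign-valued pairing with a fixed generator.

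For injectivity, observe that $\ker\phi$ is an $S_3$-stable $R$-submodule of $R\otimes(V\oplus W)$, which is a sum of copies of the standard representation, so $\ker\phi$ itself is a sum of copies of $\std$; if nonzero, it contains a nonzero $(-1)$-eigenvector of $s_{1,2}$, which must have the form $fA+gB$ with $f,g\in R$ and $A\in V_{1,2}^-$, $B\in W_{1,2}^-$ the chosen generators. Applying $s_{2,3}$ (which fixes $f,g$) and eliminating $g$ yields $f(As_{2,3}B-Bs_{2,3}A)=0$. Lemma \ref{lem:3.4} gives $As_{2,3}B-Bs_{2,3}A\ne 0$ and $\k[x_1,x_2,x_3]$ is a domain, so $f=0$; symmetrically $g=0$, a contradiction.

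For the upper bound, fix $v\in V_{1,2}^-$ and consider the $R$-linear map
\[
c_v\colon Q_m(3,\k)_\std\cap\ker(1+s_{1,2})\longrightarrow Q_m(3,\k)_\sgn, \qquad c_v(B)=Bs_{2,3}v-vs_{2,3}B,
\]
which lands in the sign isotypic by the argument of Lemma \ref{lem:3.4} extended linearly. I would identify $\ker c_v=Rv$: writing $B=(x_1-x_2)^{2m+1}K_B$ and $v=(x_1-x_2)^{2m+1}K_V$ with $K_B,K_V$ both $s_{1,2}$-invariant, the condition $c_v(B)=0$ is equivalent to $K_Bs_{2,3}K_V=K_Vs_{2,3}K_B$; an irreducible-factor analysis in the spirit of Lemma \ref{lem:3.4} forces the divisor of the rational function $K_B/K_V$ to be $S_3$-invariant, and since the ratio is visibly $s_{1,2}$-invariant it must be $S_3$-invariant, hence equal to $P/Q$ in lowest terms with $P,Q\in R$ coprime, and Corollary \ref{cor:3.3} applied to $K_V$ forces $Q$ to be a scalar. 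Because $Q_m(3,\k)_\sgn$ is a rank-one free $R$-module generated in degree $6m+3$ (Proposition \ref{prop:2.6}) and $c_v$ has internal degree $\deg V$, the hypothesis $\deg V+\deg W=6m+3$ gives
\[
\mathrm{Hilb}\bigl(Q_m(3,\k)_\std\cap\ker(1+s_{1,2})\bigr)\le\mathrm{Hilb}(R)\bigl(t^{\deg V}+t^{\deg W}\bigr),
\]
and doubling to pass from the $s_{1,2}$-antiinvariants to the full standard isotypic gives $\mathrm{Hilb}(Q_m(3,\k)_\std)\le\mathrm{Hilb}(R)\bigl(2t^{\deg V}+2t^{\deg W}\bigr)$, matching the lower bound forced by injectivity of $\phi$.

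The main obstacle is the kernel identification $\ker c_v=Rv$: $B$ is only assumed to lie in the standard isotypic antiinvariants and need not come from a generating representation, so Corollary \ref{cor:3.3} cannot be applied to $K_B$. The delicate point is that the symmetric-factor-freeness of $K_V$ alone, together with unique factorization and the orbit-matching argument on the divisor of $K_B/K_V$, suffices to force the ratio into the symmetric subring $R$.
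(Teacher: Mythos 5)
Your argument is correct in substance but takes a genuinely different route from the paper's. The paper rules out a hypothetical third generating representation $U$ directly: using that the bracket of $V$ with $W$ is a \emph{nonzero scalar} multiple of $\prod_{i<j}(x_i-x_j)^{2m+1}$ (its degree is exactly $6m+3$), it subtracts a symmetric-polynomial multiple of $W$ from $U$ to make the bracket of the result with $V$ vanish, contradicting Lemma \ref{lem:3.4}; freeness is then exactly the elimination you use for injectivity of $\phi$. You instead package the bracket as a $\k[x_1,x_2,x_3]^{S_3}$-linear map $c_v$ into the rank-one free module $Q_m(3,\k)_\sgn$ of Proposition \ref{prop:2.6} and deduce surjectivity of $\phi$ from an exact Hilbert-series count; this needs the extra input $\ker c_v=\k[x_1,x_2,x_3]^{S_3}v$, which is where the new work lies, and your resolution of the delicate point is sound: $K_B/K_V$ is $s_{1,2}$-invariant by construction and $s_{2,3}$-invariant by the vanishing of the bracket, hence an $S_3$-invariant rational function, and writing it in lowest terms $P/Q$ gives $sP=\lambda_sP$, $sQ=\lambda_sQ$ for a character $\lambda$ of $S_3$. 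You should add the one line ruling out $\lambda=\sgn$ (then $P$ and $Q$ would both be divisible by $\prod_{i<j}(x_i-x_j)$ since $\char\k\neq 2$, contradicting coprimality), after which $P,Q$ are symmetric, $Q\mid K_V$, and Corollary \ref{cor:3.3} applied to $K_V$ alone forces $Q$ to be a scalar, exactly as you say. What your route buys is a cleaner kernel computation (the fraction-field argument replaces the paper's irreducible-factor matching) and a single statement that yields generation and freeness simultaneously as an equality of Hilbert series; what the paper's route buys is brevity, since it only ever evaluates the bracket on generating representations and never has to analyze its kernel on arbitrary elements of $Q_m(3,\k)_\std\cap\ker(1+s_{1,2})$.
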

\begin{proof}
    Assume for contradiction that there exists some other generating representation $U$ of \linebreak
    $Q_m(3,\k)_\std$. By Lemma \ref{lem:3.4} we have $\deg U\geq \deg W$. Let $A\in V_{1,2}^-,B\in W_{1,2}^-,C\in U_{1,2}^-$. Then note that by Lemma \ref{lem:3.4} we have
    $$As_{2,3}B-Bs_{2,3}A=c\prod_{i<j}(x_i-x_j)^{2m+1}$$ 
    where $c$ is a nonzero constant, since it is a degree $6m+3$ polynomial in $Q_m(3,\k)_\sgn$. We also have
    $$As_{2,3}C-Cs_{2,3}A=Q\prod_{i<j}(x_i-x_j)^{2m+1}$$
    for some symmetric polynomial $Q$. But then instead of $U$, we may take the representation generated by $cC-QB$ as the generating representation of $Q_m(3,\k)_\std$. But we have
    \begin{align*}
        As_{2,3}(cC-QB)-(cC-QB)s_{2,3}A&=c(As_{2,3}C-Cs_{2,3}A)-Q(As_{2,3}B-Bs_{2,3}A)\\
        &=(cQ-Qc)\prod_{i<j}(x_i-x_j)^{2m+1}\\
        &=0
    \end{align*}
    which contradicts Lemma \ref{lem:3.4}. If $Q_m(3,\k)_\std$ was not a free module then there would be nonzero symmetric polynomials $R_1,R_2$ such that $R_1A=R_2B$. But then we would have 
    $$R_1As_{2,3}(R_2B)-R_2Bs_{2,3}(R_1A)=R_1As_{2,3}(R_1A)-R_1As_{2,3}(R_1A)=0$$
    and 
    $$R_1As_{2,3}(R_2B)-R_2Bs_{2,3}(R_1A)=R_1R_2(As_{2,3}B-Bs_{2,3}A)$$
    which cannot happen since none of $R_1,R_2,as_{2,3}b-bs_{2,3}a$ are zero.
\end{proof}

Now, note that we have $\k[x_1,x_2,x_3]=\k[x_1+x_2+x_3,x_1-x_3,x_2-x_3]$, so for any $P\in Q_m(3,\k)$ we may write it as $P=P'+P''$, where $P'$ is divisible by $x_1+x_2+x_3$ and $P''\in\k[x_1-x_3,x_2-x_3]$. Since $x_1+x_2+x_3$ is symmetric, the action of $S_3$ preserves this decomposition, so for any $s_{i,j}\in S_3$ we have $(1-s_{i,j})P=(1-s_{i,j})P'+(1-s_{i,j})P''$. Then since $(x_i-x_j)^{2m+1}\in\k[x_1-x_3,x_2-x_3]$, it follows that $P',P''\in Q_m(3,\k)$. Moreover, if $P$ is a generator of $Q_m(3,\k)$, then $P''$ is also a generator since $P'$ is generated by $\frac{P'}{x_1+x_2+x_3}$. So we may assume all generators of $Q_m(3,\k)$ belong to $\k[x_1-x_3,x_2-x_3]$. 

\begin{lemma}\label{lem:3.6}
    Let $V,W$ be generating representations of $Q_m(3,\k)_\std$ such that $\deg V<\deg W$ and $\deg W-\deg V>1$. Then we may take the generating representations to be $V,W'$ such that either $V$ or $W'$ is in $Q_{m+1}(3,\k)_\std$.
\end{lemma}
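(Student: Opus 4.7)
The plan is to consider two cases. If $V \in Q_{m+1}(3,\k)_\std$, take $W' = W$; otherwise, construct $W'$ as the standard representation generated by $B' := B - fA$ for a suitable symmetric polynomial $f$ of degree $d_W - d_V \geq 2$, where $A \in V_{1,2}^-$ and $B \in W_{1,2}^-$. Writing $A = (x_1-x_2)^{2m+1}K$ and $B = (x_1-x_2)^{2m+1}L$ via Lemma \ref{lem:3.2}, I first observe that $V \in Q_{m+1}$ is equivalent to $(x_1-x_2)^2 \mid K$, equivalently $K_0 := K(x_1,x_1,x_3) = 0$: the three $Q_{m+1}$-divisibility conditions for $A$ all reduce, via the formulas for $(1-s_{i,j})A$ derived in the proof of Lemma \ref{lem:3.2}, to this single condition on $K$. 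The same reduction shows that $W' \in Q_{m+1}$ is equivalent to $L_0 = f_0 K_0$ in $\k[x_1, x_3]$, where $L_0 := L(x_1,x_1,x_3)$ and $f_0 := f(x_1,x_1,x_3)$.

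Next, I would verify that $\{V, W'\}$ still generates $Q_m(3,\k)_\std$: since $B = B' + fA$, the $\k[x_1,x_2,x_3]^{S_3}$-submodule generated by $V \cup W'$ equals that generated by $V \cup W$, which is all of $Q_m(3,\k)_\std$. Moreover $W'$ is genuinely a copy of $\std$ (rather than $\sgn$) because $B' \neq 0$—otherwise $W$ would lie in $V \cdot \k[x_1,x_2,x_3]^{S_3}$, contradicting its role as a generator independent of $V$.

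The heart of the argument, in the case $K_0 \neq 0$, is to exhibit a symmetric polynomial $f$ of degree $d_W - d_V$ satisfying $L_0 = f_0 K_0$. My plan is to use a bracket identity analogous to Lemma \ref{lem:3.4}: by multiplying the cyclic relation for $K$ by $L$ and subtracting the cyclic relation for $L$ multiplied by $K$, one finds that $L \cdot sK - K \cdot sL$ is divisible by $(x_3-x_1)^{2m+1}$. Restricting this divisibility to $x_1 = x_2$, combined with the mod-$(x_1-x_2)^2$ expansion of the cyclic relation (which shows, when $\char \k \nmid 2m+1$, that $K(x_1, x_3, x_1)$ is divisible by $x_1 - x_3$), should yield $K_0 \mid L_0$ in $\k[x_1, x_3]$ and identify a candidate for $f_0$.

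The step I expect to be most delicate is then checking that this candidate $f_0$ actually lies in the image of the restriction map $\k[x_1,x_2,x_3]^{S_3} \to \k[x_1, x_3]$ sending $x_2 \mapsto x_1$, which is a proper subring of $\k[x_1, x_3]$ (namely the coordinate ring of the discriminant locus). I anticipate using Corollary \ref{cor:3.3}, which forbids $K$ and $L$ from having nonconstant symmetric factors, to rule out obstructions, in the same spirit as the factorization analysis in the proof of Lemma \ref{lem:3.4} that classified the irreducible factors of $K$ and $L$.
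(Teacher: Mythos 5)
Your overall strategy --- replace $B$ by $B-fA$ for a symmetric $f$ of degree $\deg W-\deg V$, and reduce membership in $Q_{m+1}$ to the single condition $L_0=f_0K_0$ on restrictions to $x_1=x_2$ (legitimate, since for $s_{1,2}$-invariant polynomials divisibility by $x_1-x_2$ and by $(x_1-x_2)^2$ coincide) --- matches the shape of the paper's argument. But the execution has a genuine gap at exactly the point you flag as ``most delicate,'' and the missing ingredient is a reduction the paper sets up \emph{before} this lemma: all generators may be taken inside the subalgebra $\k[x_1-x_3,x_2-x_3]$, and one may take $P_2,P_3$ (hence all the symmetric polynomials one ever needs) inside it as well. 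Restricting that subalgebra to $x_1=x_2$ lands in $\k[x_1-x_3]$, whose graded pieces are \emph{one-dimensional}. Consequently $K_0$, $L_0$, and $Q_0$ are forced to be scalar multiples of powers of $x_1-x_3$; the divisibility $K_0\mid L_0$ is automatic (given $K_0\neq 0$), and $f$ can simply be taken to be a scalar multiple of a fixed monomial $Q$ in $P_2,P_3$ of degree $\deg W-\deg V$ --- one only has to check $Q_0\neq 0$, i.e.\ that $Q$ is not divisible by the discriminant, which is where the hypothesis $\deg W-\deg V>1$ enters (there is no degree-one element of $\k[P_2,P_3]$). The paper itself notes in Section \ref{sec:5} that this two-dimensional subalgebra is the key to the proof and is precisely what breaks in the $q$-deformed setting.

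Working instead in all of $\k[x_1,x_3]$, as you propose, leaves two unproved claims. First, the divisibility $K_0\mid L_0$: your bracket identity gives $(x_3-x_1)^{2m+1}\mid L\,sK-K\,sL$, whose restriction to $x_1=x_2$ involves $K(x_1,x_3,x_1)$ and $L(x_1,x_3,x_1)$ alongside $K_0,L_0$; extracting $K_0\mid L_0$ from this requires control of common factors between $K_0$ and $K(x_1,x_3,x_1)$ that your sketch does not supply. Second, and more seriously, even granting $L_0=gK_0$ for some $g\in\k[x_1,x_3]$, the restriction map $\k[x_1,x_2,x_3]^{S_3}\to\k[x_1,x_3]$ is far from surjective in each degree (already in degree $2$ its image is a proper subspace), so $g$ need not lift to a symmetric $f$; Corollary \ref{cor:3.3} constrains $K$ and $L$, not $g$, and I do not see how it rules out this obstruction. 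Both issues evaporate once everything is confined to $\k[x_1-x_3,x_2-x_3]$, so I would recommend importing that reduction as the first step of your argument rather than trying to close these gaps directly.
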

\begin{proof}
    Let $A\in V_{1,2}^-,B\in V_{1,2}^-$ and let $A=(x_1-x_2)^{2m+1}K,B=(x_1-x_2)^{2m+1}L$. We may assume $K,L\in\k[x_1-x_3,x_2-x_3]$. Note that since $K,L$ are invariant with respect to $s_{1,2}$, we have that $K,L$ are symmetric in $x_1-x_3$ and $x_2-x_3$. Recall from Section \ref{sec:2} that by the fundamental theorem of symmetric polynomials, $\k[x_1,x_2,x_3]^{S_n}$ as an algebra is generated by polynomials $P_1,P_2,P_3$ of degrees $1,2,3$, respectively. We may take 
    $$P_2=(x_1-x_2)^2+(x_2-x_3)^2+(x_3-x_1)^2,\quad P_3=(x_1+x_2-2x_3)(x_2+x_3-2x_1)(x_3+x_1-2x_2)$$
    so that $P_2,P_3\in\k[x_1-x_3,x_2-x_3]$. Because $\deg L-\deg K=\deg W-\deg V>1$, there exists some nonzero symmetric polynomial $P$ generated by $P_2,P_3$ of degree $\deg L-\deg K$. Moreover, note that 
    $$P_2^3-2P_3^2=54\prod_{i<j}(x_i-x_j)^2$$
    so if we choose $Q$ to be a monomial in $P_2,P_3$ then it will not be divisible by $\prod_{i<j}(x_i-x_j)^2$ since there are no polynomial relations between $P_2,P_3$. 
    Let us fix such a polynomial $Q$.
    
    Note that $A,B\in Q_{m+1}(3,\k)$ if and only if $K,L$ are divisible by $(x_1-x_2)^2\in \k[x_1-x_3,x_2-x_3]$, respectively. Then consider the images of $QK$ and $L$ in $\k[x_1-x_3,x_2-x_3]/((x_1-x_2)^2)$. First, consider $QK$. We have $Q\neq 0$, since $Q$ is not divisible by $\prod_{i<j}(x_i-x_j)^2$ and any symmetric polynomial divisible by $(x_1-x_2)^2$ must also be divisible by $\prod_{i<j}(x_i-x_j)^2$. If $K=0$ then $K$ is divisible by $(x_1-x_2)^2$, so $K\in Q_{m+1}(3,\k)$ and we may take $W'=W$ to complete the proof. So we may assume $K\neq 0$. Lastly, note that the only zero divisors in $\k[x_1-x_3,x_2-x_3]/((x_1-x_2)^2)$ are multiples of $x_1-x_2$. But since $Q,K$ are both $s_{1,2}$-invariant, if either one is divisible by $x_1-x_2$ then it is also divisible by $(x_1-x_2)^2$. But we already dealt with this case for both $Q$ and $K$, so we may assume that neither $Q$ nor $K$ are zero divisors. Thus $QK\neq 0$. 
    
    Now, note that when working in $\k[x_1-x_3,x_2-x_3]/((x_1-x_2)^2)$, from any homogeneous polynomial we can cancel out terms where the difference in the degrees of $x_1-x_3$ and $x_2-x_3$ is large. Since $QK$ and $L$ are symmetric in $x_1-x_3$ and $x_2-x_3$, we can cancel out terms until we eventually end up with either
    $$QK=c_K(x_1-x_3)^d(x_2-x_3)^d,\quad L=c_L(x_1-x_3)^d(x_2-x_3)^d$$
    or 
    $$QK=c_K(x_1+x_2-2x_3)(x_1-x_3)^d(x_2-x_3)^d,\quad L=c_L(x_1+x_2-2x_3)(x_1-x_3)^d(x_2-x_3)^d$$
    for scalars $c_K,c_L$ and $d=\Big\lfloor\frac{\deg L}{2}\Big\rfloor$ depending on whether $\deg L$ is even or odd. Note that $c_K\neq 0$, as otherwise we would have $QK=0$. Then note that $L-\frac{c_L}{c_K}QK=0$. Set $L'=L-\frac{c_L}{c_K}QK$. Then $L'$ is in $Q_{m+1}(3,\k)$, and we may set $W'$ to be the representation generated by $(x_1-x_2)^{2m+1}L'$. Note that since $\frac{c_L}{c_K}QK$ is generated by $K$, $W'$ is just as good of a generating representation as $W$. Then $W'\subset Q_{m+1}(3,\k)_\std$, as desired.
\end{proof}

Finally, we are ready for the proof of Theorem \ref{thm:3.1}.

\begin{proof}[Proof of Theorem \ref{thm:3.1}]
    By (\ref{eq:3.1}), the Hilbert polynomial of $Q_m(3,\C)$ is $1+2t^{3m+1}+2t^{3m+2}+t^{6m+3}$, and a result of Ren and Xu \cite{Ren_2020} says that the Hilbert series of $Q_m(3,\k)$ is at least as big as the Hilbert series of $Q_m(3,\C)$ in all degrees. In particular, this implies that $Q_m(3,\k)_\std$ contains a generator of degree at most $3m+1$.
    
    We will first show that $Q_m(3,\k)_\std$ is freely generated by representations of degree $d$ and ${6m+3-d}$ for some $d$. We proceed by induction on $m$. The base case $m=0$ is a direct consequence of the fundamental theorem of symmetric polynomials, so we proceed directly to the inductive step.
    
    Assume that $Q_m(3,\k)_\std$ is freely generated by generators $V,W$ of degree $d$ and $e$ respectively where $d+e=6m+3$ and $d<e$. If $e-d>1$ then by Lemma \ref{lem:3.6} we may assume one of $V,W$ is in $Q_{m+1}(3,\k)_\std$. Let us assume it is $W$. Then note that $\prod_{i<j}(x_i-x_j)^2V$ and $W$ are both contained in $Q_{m+1}(3,\k)_\std$, and the sum of their degrees is $6m+3+6=6(m+1)+3$. If they are not generators then they must be generated by the same generator, as otherwise their respective generators would violate Lemma \ref{lem:3.4}. But this would imply that they are not independent over $\k[x_1,x_2,x_3]^{S_3}$, which cannot happen since $V$ and $W$ are independent over $\k[x_1,x_2,x_3]^{S_3}$. So they are generators, and by Lemma \ref{lem:3.5} $Q_{m+1}(3,\k)_\std$ is a free module generated by them. The same proof also works in the case when $V$ is in $Q_{m+1}(3,\k)$.
    
    Now, consider when $e-d=1$. There exists a generator $U$ of $Q_{m+1}(3,\k)_\std$ of degree at most $3(m+1)+1=d+3$ by the above. Note also that $U$ is generated by $V,W$ since it is contained in $Q_m(3,\k)_\std$ by Proposition \ref{prop:2.2}, so $\deg U\geq d$. We will proceed by casework on $\deg U$. If $\deg U=d$, then we must have $U=V$ since $V$ is the only standard representation in $Q_m(3,\k)$ of degree $d$. Then note that both $V$ and $\prod_{i<j}(x_i-x_j)^2W$ are in $Q_{m+1}(3,\k)_\std$, and they freely generate $Q_{m+1}(3,\k)_\std$ by the same argument as above. If $\deg U=d+1$ then we have $U=QV+W$ for some symmetric polynomial $Q$ of degree 1. But recall that all generators of the quasi-invariants lie in $\k[x_1-x_3,x_2-x_3]$, and the only symmetric polynomial of degree 1 is $P_1=x_1+x_2+x_3$, which does not lie in $\k[x_1-x_3,x_2-x_3]$. Thus we must have $Q=0$ and $U=W$. Then $W$ lies in $Q_{m+1}(3,\k)$, so the same proof as above holds. If $\deg U=d+2$, then since there are no symmetric polynomials of degree 1 in $\k[x_1-x_3,x_2-x_3]$ we must have $U=P_2V$. But then any $P\in U$ is divisible by $P_2$, which contradicts Corollary \ref{cor:3.3}. If $\deg U=d+3$, then by Lemma \ref{lem:3.4} there exists no other generators of $Q_{m+1}(3,\k)_\std$ of degree at most $d+3$. But then there must exist a generator of degree $d+4=3(m+1)+2$ since otherwise the Hilbert series of $Q_{m+1}(3,\C)$ would be larger than that of $Q_{m+1}(3,\k)$ in degree $3(m+1)+2$. Then by Lemma \ref{lem:3.5} $Q_{m+1}(3,\k)_\std$ is freely generated by these two generators.
    
    Thus $Q_m(3,\k)_\std$ is generated by representations of degree $d$ and $6m+3-d$. Note that one of $d,6m+3-d$ is at most $3m+1$, so we may assume $d\leq 3m+1$. Let $P\in Q_m(3,\k)_\std$ be of degree $d$. Then $(1-s_{i,j})P$ is also of degree $d$ and is nonzero for some $s_{i,j}$ since $P$ is not in the trivial representation. But it is divisible by $(x_i-x_j)^{2m+1}$, so it must be of degree at least $2m+1$. So we get $2m+1\leq d\leq 3m+1$. Note that $d$ only depends on $m,p$ and not on the ground field since quasi-invariants are defined by systems of linear equations with integer coefficients. Since std has dimension 2, we get that the Hilbert polynomial is $1+2t^d+2t^{6m+3-d}+t^{6m+3}$ so the Hilbert series is $\frac{1+2t^d+2t^{6m+3-d}+t^{6m+3}}{(1-t)(1-t^2)(1-t^3)}$.
\end{proof}
\begin{remark}
    In \cite{Ren_2020} Ren and Xu conjectured various properties of $Q_m(n,\k)$, those being that the largest degree term in its Hilbert polynomial is $t^{{n\choose{2}}(2m+1)}$, and that when $\char\k>2$ it is a free module of rank $n!$ and its Hilbert polynomial is palindromic. Theorem \ref{thm:3.1} proves all parts of this conjecture for $n=3$ and $\char\k>3$. We also remark that Proposition \ref{prop:2.6} shows that there exists a generator of $Q_m(n,\k)$ of degree ${n\choose{2}}(2m+1)$, though it is possible that there are relations of the same degree that cancel out with this generator in the Hilbert polynomial. We also note that even if the generator contributes a term to the Hilbert series
\end{remark}

\section{Conditions for the Hilbert series to differ from characteristic 0}\label{sec:4}

A question of interest that was studied in \cite{Ren_2020} was to find the values of $m,p$ such that $Q_m(n,\k)$ has a different Hilbert series than $Q_m(n,\C)$, where $\char\k=p$. The following theorem brings us closer to solving this problem for $n=3$ in the non-modular case.

\begin{theorem}\label{thm:4.1}
    Let $V,W$ be the generating representations of $Q_m(3,\C)$ and let $A\in V_{1,2}^-,B\in W_{1,2}^-$ where the coefficients of each of $A,B$ are coprime integers. Let 
    $$As_{1,2}B-Bs_{1,2}A=c\prod_{i<j}(x_i-x_j)^{2m+1}.$$
    Let $\mathrm{char}\,\k=p>3$. Then the Hilbert series of $Q_m(3,\k)$ is different from the Hilbert series of $Q_m(3,\C)$ if and only if $p$ divides $c$.
\end{theorem}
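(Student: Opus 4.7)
The plan is to use Theorem \ref{thm:3.1} to reduce the statement to a linear-algebra computation inside two specific graded components of $Q_m(3,\k)_\std$, with the bilinear pairing $(X,Y) \mapsto X\,s_{2,3}Y - Y\,s_{2,3}X$ of Lemma \ref{lem:3.4} playing the role of the obstruction. By the Ren--Xu inequality invoked in the proof of Theorem \ref{thm:3.1}, the characteristic-$p$ Hilbert series dominates the characteristic-$0$ one coefficientwise, and the two agree iff the integer $d$ from Theorem \ref{thm:3.1} equals $3m+1$, equivalently iff $Q_m(3,\k)_\std$ has no generator of degree strictly less than $3m+1$. So it suffices to prove $\bar c := c \bmod p$ is nonzero iff $d=3m+1$. (I read the quantity in the theorem as $A\,s_{2,3}B - B\,s_{2,3}A$ to match Lemma \ref{lem:3.4}, since the stated $A\,s_{1,2}B - B\,s_{1,2}A$ vanishes identically for $A \in V_{1,2}^-,\,B \in W_{1,2}^-$.)

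For the direction $p \nmid c \Rightarrow d = 3m+1$, assume for contradiction $d < 3m+1$, and let $\tilde V_0, \tilde W_0$ be the generators of $Q_m(3,\k)_\std$ of degrees $d$ and $6m+3-d > 3m+2$. By freeness, every std subrepresentation of $Q_m(3,\k)_\std$ sitting in a degree at most $3m+2$ is a symmetric-polynomial multiple of $\tilde V_0$, so I may write $\bar A = P_A \bar A_0$ and $\bar B = P_B \bar A_0$ with $\bar A_0$ the $s_{1,2}$-antiinvariant generator of $\tilde V_0$ and $P_A, P_B$ symmetric. Since $s_{2,3}$ fixes symmetric polynomials,
\begin{equation*}
\bar A\,s_{2,3}\bar B - \bar B\,s_{2,3}\bar A = P_A P_B \bar A_0\,s_{2,3}\bar A_0 - P_A P_B \bar A_0\,s_{2,3}\bar A_0 = 0,
\end{equation*}
forcing $\bar c = 0$, a contradiction.

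For the direction $p \mid c \Rightarrow d < 3m+1$, assume $d = 3m+1$ for contradiction. Then $Q_m(3,\k)_\std[3m+1]$ is a single copy of std, so $\bar A = \lambda \bar A_0$ with $\lambda \neq 0$, while $Q_m(3,\k)_\std[3m+2]$ decomposes as $\tilde W_0 \oplus P_1\tilde V_0$ with $P_1 = x_1+x_2+x_3$; write $\bar B = \mu \bar B_0 + \nu P_1 \bar A_0$ in the $s_{1,2}$-antiinvariant part. By the paragraph preceding Lemma \ref{lem:3.6} I may assume $A, B \in \Z[x_1-x_3,\,x_2-x_3]$ and $\bar A_0, \bar B_0 \in \k[x_1-x_3,\,x_2-x_3]$; combined with $P_1 \notin \k[x_1-x_3,\,x_2-x_3]$ (using $\char\k \neq 3$, so that $\k[x_1,x_2,x_3] = \k[P_1]\otimes \k[x_1-x_3,\,x_2-x_3]$), this forces $\nu = 0$, and hence $\mu \neq 0$. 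Applying Lemma \ref{lem:3.4} to $(\tilde V_0, \tilde W_0)$ gives $\bar A_0\,s_{2,3}\bar B_0 - \bar B_0\,s_{2,3}\bar A_0 = c'\prod_{i<j}(x_i-x_j)^{2m+1}$ with $c' \neq 0$, so $\bar c = \lambda\mu c' \neq 0$, contradicting $p \mid c$.

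The main obstacle I expect is in the second direction: compatibly arranging that, after reducing mod $p$, the chosen characteristic-$0$ element $B$ is actually a scalar multiple of the characteristic-$p$ generator $\bar B_0$ (with no $P_1\bar A_0$ contamination). This amounts to a small lifting argument showing that the characteristic-$0$ generators $V, W$ may be chosen inside $\Z[x_1-x_3,\,x_2-x_3]$ and that the characteristic-$p$ generators $\bar A_0, \bar B_0$ may be chosen in $\k[x_1-x_3,\,x_2-x_3]$, both following from the argument preceding Lemma \ref{lem:3.6} but which must be verified to hold simultaneously for a pair of generators related by reduction.
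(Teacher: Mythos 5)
Your proof is correct and follows essentially the same route as the paper: both directions reduce, via Theorem \ref{thm:3.1} and the pairing of Lemma \ref{lem:3.4}, to checking whether the reductions of $A$ and $B$ modulo $p$ remain independent generators, with the same use of $P_1\notin\k[x_1-x_3,x_2-x_3]$ to rule out the degree-$(3m+2)$ contamination. Your reading of $s_{1,2}$ as $s_{2,3}$ is the intended one (the pairing as literally stated vanishes identically on $V_{1,2}^-\times W_{1,2}^-$), and the compatibility-of-reductions concern you flag at the end is genuine but is likewise left implicit in the paper's own argument.
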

\begin{proof}
    First, assume that the Hilbert series is different in characteristic $p$. Then note that since the Hilbert polynomial over characteristic zero is $1+2t^{3m+1}+2t^{3m+2}+t^{6m+3}$ by (\ref{eq:3.1}), $Q_m(3,\k)_\std$ must be generated by representations of degree $d$ and $6m+3-d$ for $d<3m+1$ by Theorem \ref{thm:3.1}. Then note that $6m+3-d>3m+2$, so the images of $V$ and $W$ in characteristic $p$ are generated by the generator of degree $d$. But then $A,B$ are not independent over $\k[x_1,x_2,x_3]^{S_3}$, so we have $As_{1,2}B-Bs_{1,2}A=0$, and we must have $p|c$. 
    
    Now, assume that $p|c$. Then $A,B$ are not both generators of $Q_m(3,\k)$ by Lemma \ref{lem:3.4}, and they are nonzero since their coefficients are coprime. Let $\deg A=3m+1,\deg B=3m+2$. If $A$ is not a generator then it is generated by some representation of degree less than $3m+1$, so the Hilbert series in characteristic $p$ is different from that in characteristic zero since no such generator exists in characteristic zero. If $B$ is not a generator then it is generated by some representation of degree less than $3m+2$. But note that it cannot be generated by a representation of degree $3m+1$, as the only symmetric polynomial in degree 1 is $x_1+x_2+x_3$ and $B\in\k[x_1-x_3,x_2-x_3]$. So there must be a generator of degree less than $3m+1$, and the Hilbert series is different in characteristic $p$.
\end{proof}

Using the methods from the proof of Theorem \ref{thm:3.1}, we were able to calculate the generators $A,B$ as above recursively. Let us be more precise by calling the generators of $Q_m(3,\C)$ $A_m$ and $B_m$. Let $A_m=(x_1-x_2)^{2m+1}K_m,B_m=(x_1-x_2)^{2m+1}L_m$. Then we expressed $K_m,L_m$ as polynomials in $(x_1-x_2)^2$ with coefficients that are scalar multiples of 
$$(x_1-x_3)^r(x_2-x_3)^r,(x_1+x_2-2x_3)(x_1-x_3)^r(x_2-x_3)^r$$
for some $r$. Recalling that the generators of $\k[x_1-x_3,x_2-x_3]^{S_3}$ are $P_2,P_3$, note that $K_{m+1}$ is a linear combination of $P_3K_m$ and $P_2L_m$ and $L_{m+1}$ is a linear combination of $P_2^2K_m$ and $P_3L_m$. Thus by choosing appropriate linear combinations such that the constant terms as polynomials in $(x_1-x_2)^2$ vanish, we were able to construct the generators $K_{m+1},L_{m+1}$. We were then able to construct the coefficients $c$ in Theorem \ref{thm:4.1} and compute their prime factors to find the set of $p>3$ for which the Hilbert series of $Q_m(3,\C)$ differs from the Hilbert series of $Q_m(3,\k)$ when $\char \k=p$. 

In \cite{Ren_2020} Ren and Xu proved a sufficient condition, that if there exists integers $a,k$ that satisfy
\begin{equation}\label{eq:4.1}
    \frac{mn(n-2)+{n\choose{2}}}{n(n-2)k+{n\choose{2}}-1}\leq p^a\leq\frac{mn}{nk+1}\tag{4.1}
\end{equation}
then the Hilbert series of $Q_m(n,\mathbb{F}_p)$ is different from that of $Q_m(n,\C)$. They also conjectured that this condition is necessary. Using their programs, they confirmed this conjecture for $n=3,$\linebreak$m\leq 15, p<50$. Using the method described above, we were able to confirm the conjecture for a much larger range of values, namely $n=3,m\leq3000,3<p<\infty$. Thus we have strong evidence that the conjecture is true, at least in the case $n=3,p>3$.

Closer inspection of the inductive step in the proof of Theorem \ref{thm:3.1} also provides a way to compute the degrees of the generators in the cases where they are different in characteristic $p$. Let $V,W$ be the generating representations of $Q_m(3,\k)_\std$, and assume that they are of degree $3m+1$ and $3m+2$. Then from the proof of Theorem \ref{thm:3.1} we see that the generators of $Q_{m+1}(3,\k)_\std$ are of degree $3(m+1)+1$ and $3(m+1)+2$ unless either $V$ or $W$ is in $Q_{m+1}(3,\k)_\std$. For the sake of this argument it will not matter which representation it is, so let us say that it is $V$. Then let $k$ be the maximum positive integer such that $V\in Q_{m+k}(3,\k)$. For all $1\leq i\leq k$, we have that the minimal degree generator of $Q_{m+i}(3,\k)_\std$ is $V$, and for all $1\leq b\leq k$, the minimal degree generator of $Q_{m+k+b}(3,\k)$ is $\prod_{i<j}(x_1-x_j)^{2b}V$ of degree $3m+1+6b$. Note that when $b=k$ the lowest degree generator is of degree $3m+1+6k=3(m+2k)+1$, so the Hilbert series is equal to the Hilbert series in characteristic 0. 

Note that in \cite{Ren_2020} the authors explicitly computed elements of $Q_m(n,\k)_\std$ of degree less than $3m+1$ when $m,p$ satisfy (\ref{eq:4.1}) for some $a$. Assuming the condition (\ref{eq:4.1}) is necessary as conjectured by Ren and Xu, these elements coincide with the minimal degree generators that we described above. Thus a proof of the conjecture would also prove an explicit formula for the Hilbert series of $Q_m(3,\k)$ for all $m$ with $\char\k>3$. Namely, let $p,a,k$ satisfy \ref{eq:4.1} such that $a$ is as large as possible. Then if $p^a(2k+1)\geq 2m+1$, the Hilbert series of $Q_m(3,\k)$ would be
$$\frac{1+2t^{p^a(3k+1)}+2t^{6m+3-p^a(3k+1)}+t^{6m+3}}{(1-t)(1-t^2)(1-t^3)}$$
and if $p^a(2k+1)<2m+1$ then the Hilbert series would be
$$\frac{1+2t^{p^a(3k+2)}+2t^{6m+3-p^a(3k+2)}+t^{6m+3}}{(1-t)(1-t^2)(1-t^3)}.$$
Note that in the second case, the lowest degree generator of $Q_m(3,\k)_\std$ has degree \linebreak
${6m+3-p^a(3k+2)}$.

\subsection{The Opdam Shift Operator}

In this section we will describe some partial results towards proving the conjecture of Ren and Xu described above using the Opdam shift operators $O_m$ introduced in \cite{Opdam1989}.

\begin{definition}
    The Opdam shift operator $O_m$ is the differential operator (i.e. operator in the algebra $\Z\left[x_i,\partial_{x_i},\frac{1}{x_1-x_j}\right]$) whose action on symmetric polynomials is given by 
    \begin{equation}\label{eq:4.2}
        \prod_{i<j}\left(D_i(m)-D_j(m)\right)\prod_{i<j}(x_i-x_j)\tag{4.2}
    \end{equation}
    where
    $$D_i(k)=\partial_{x_i}-k\sum_{j\neq i}\frac{1}{x_i-x_j}(1-s_{ij})$$
    is the Dunkl operator.
\end{definition}

Note that even though $O_m$ is defined according to $\ref{eq:4.2}$, its action on polynomials is not given by $\ref{eq:4.2}$ since this is not a differential operator. Instead, to compute explicitly $O_m$ as a differential operator one must observe how $O_m$ acts on symmetric polynomials as a differential operator, and then take that differential operator as the definition of $O_m$.

\begin{example}\label{ex:4.3}
    In 3 variables, one can compute explicitly that
    $$O_m=(x_1-x_2)(x_1-x_3)(x_2-x_3)(\partial_{x_1}-\partial_{x_2})(\partial_{x_1}-\partial_{x_3})(\partial_{x_2}-\partial_{x_3})+6(1-2m)(1-3m)(2-3m)$$$$+\sum_{cyc}\Bigg[\left((-2+3m)(x_1-x_3)(x_2-x_3)+(1-2m)(x_1-x_2)^2\right)(\partial_{x_1}-\partial_{x_2})^2$$$$+\left((6-22m+20m^2)(x_1-x_2)+4m(-1+m)\left(\frac{(x_2-x_3)^2}{x_3-x_1}-\frac{(x_1-x_3)^2}{x_3-x_2}\right)\right)(\partial_{x_1}-\partial_{x_2})\Bigg].$$
\end{example}

The connection between Opdam shift operators and quasi-invariant polynomials was discovered in \cite{berest2003cherednik} and explained further in \cite{shift}. Explicitly, we have the following result:

\begin{theorem}[\cite{berest2003cherednik}, \cite{shift}]
    We have $O_m(Q_{m-1}(n,\C))\subset Q_m(n,\C)$.
\end{theorem}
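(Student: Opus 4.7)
The plan is to reduce the theorem to a divisibility statement using the $S_n$-equivariance of $O_m$, and then verify that statement by carefully unpacking the Dunkl-operator definition of $O_m$. First I would check that $O_m$ commutes with the $S_n$-action on $\C[x_1,\ldots,x_n]$. Since $w D_i(m) w^{-1} = D_{w(i)}(m)$, the product $\prod_{i<j}(D_i(m) - D_j(m))$ transforms under $S_n$ by the sign character, as does the Vandermonde $\prod_{i<j}(x_i - x_j)$, so their composition---and therefore the differential operator $O_m$ they define---is $S_n$-invariant.

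Fix $g \in Q_{m-1}(n,\C)$ and a transposition $s_{ij}$. The $S_n$-equivariance of $O_m$ gives
$$(1 - s_{ij}) O_m(g) \;=\; O_m\bigl((1 - s_{ij}) g\bigr) \;=\; O_m\bigl((x_i - x_j)^{2m-1} h\bigr),$$
where $h = (1 - s_{ij})g / (x_i - x_j)^{2m-1}$ is a polynomial. Since $(1-s_{ij})g$ is $s_{ij}$-antiinvariant and $(x_i-x_j)^{2m-1}$ is as well, the quotient $h$ is $s_{ij}$-invariant, and hence $(x_i-x_j)^{2m-1}h$ is $s_{ij}$-antiinvariant. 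The theorem thus reduces to the following claim: for every $s_{ij}$-invariant polynomial $h$, the polynomial $O_m\bigl((x_i - x_j)^{2m-1} h\bigr)$ is divisible by $(x_i - x_j)^{2m+1}$.

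Since the output of this reduced claim is itself $s_{ij}$-antiinvariant by $S_n$-equivariance, its expansion in powers of $(x_i - x_j)$ involves only odd exponents; the content of the claim is that the coefficients of $(x_i-x_j)^1, (x_i-x_j)^3, \ldots, (x_i-x_j)^{2m-1}$ in this expansion all vanish. I would establish this by working with the formula $O_m = \prod_{k<l}(D_k(m) - D_l(m))\circ\prod_{k<l}(x_k - x_l)$ inside the rational Cherednik algebra and using the commutation relations between the Dunkl operators $D_k(m)$ and the multiplication operators $x_l$ to rewrite $O_m$ in a form where the required cancellations become visible on an $s_{ij}$-antiinvariant input; the antiinvariance should convert each potentially singular term arising from the factors $1/(x_k-x_l)$ into an explicit polynomial factor of $(x_i - x_j)$. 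The elementary $n=2$ check of the formula $O_m = 2(1-2m) + (x_1-x_2)(\partial_{x_1}-\partial_{x_2})$, where the two terms individually only achieve divisibility $2m-1$ but combine to give divisibility $2m+1$ once $h$ is taken symmetric, already illustrates the required phenomenon. I expect this commutator bookkeeping---ensuring that $m$ distinct vanishings hold simultaneously for general $n$---to be the main technical hurdle of the proof.
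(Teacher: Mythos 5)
This theorem is not proved in the paper at all; it is imported from \cite{berest2003cherednik} and \cite{shift}, so there is no internal proof to compare against. Judged on its own terms, your reduction has a sound first half: the $S_n$-equivariance of $O_m$ and the observation that an $s_{ij}$-antiinvariant polynomial expands in odd powers of $x_i-x_j$ are both correct (though note that $O_m$ is \emph{defined} only by its action on symmetric polynomials, so to conclude $w O_m w^{-1}=O_m$ you need the small extra argument that a differential operator annihilating all of $\C[x]^{S_n}$ is zero, whence the extension to a differential operator is unique and inherits the invariance).

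There are, however, two genuine gaps. First, the reduced claim as you state it --- ``for \emph{every} $s_{ij}$-invariant $h$, $O_m((x_i-x_j)^{2m-1}h)$ is divisible by $(x_i-x_j)^{2m+1}$'' --- is ill-posed: as Example 4.3 shows, the coefficients of $O_m$ have poles along the \emph{other} hyperplanes $x_k=x_l$, so for an arbitrary such $h$ the output need not even be a polynomial. You must either retain the hypothesis that the input is $(1-s_{ij})g$ for a genuine $g\in Q_{m-1}$ (whose quasi-invariance along the other walls kills those poles --- a fact that itself requires proof, since polynomiality of $O_m(g)$ is part of the theorem's content), or recast the claim as a statement about the order of vanishing along the single hyperplane $x_i=x_j$ in the localization inverting the other linear forms. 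Second, and more seriously, the entire substance of the theorem --- the $m$ simultaneous cancellations forcing the coefficients of $(x_i-x_j)^1,\dots,(x_i-x_j)^{2m-1}$ to vanish --- is deferred to unspecified ``commutator bookkeeping,'' which you yourself identify as the main hurdle. That is the whole proof; the $n=2$ sanity check does not indicate how to organize it for general $n$. The arguments in the cited literature do not proceed by brute-force commutators but by conceptual means (the intertwining relation between the shift operator and the Calogero--Moser/Cherednik operators at parameters $m-1$ and $m$, combined with a characterization of $Q_m$ in terms of those operators, or a local rank-one analysis along each wall). As written, the proposal is a plausible plan with its core step missing.
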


Note also that the Opdam shift operators are $S_n$-invariant; this follows from the fact that the operator in Equation \ref{eq:4.2} is symmetric. In the case $n=3$, we can see this directly from Example $\ref{ex:4.3}$. Thus we can use the shift operators to inductively compute the generators of $Q_m(3,\C)_\std$, which we can use to describe the Hilbert series of $Q_m(3,\k)$ through Theorem \ref{thm:4.1}. 

Let $A_m, B_m$ be the degree $3m+1,3m+2$ generators of $Q_m(3,\C)$ whose coefficients have GCD equal to 1. Then let $P_2=\sum_{cyc}(x_i-x_j)^2,P_3=\prod_{cyc}(x_i+x_j-2x_k)$ be the degree 2 and 3 symmetric polynomials in $\C[x_1-x_2,x_2-x_3]$. Let 
$$O_{m+1}P_3A_m=a_mA_{m+1},\quad O_{m+1}P_3B_m=b_mB_{m+1},$$$$O_{m+1}P_2B_m=d_mA_{m+1},\quad O_{m+1}P_2^2A_m=e_mB_{m+1}.$$
Note that the Opdam shift operator is a degree 0 operator, and this combined with degree counting shows that the left hand sides of the above equations are indeed multiples of the respective generators $A_{m+1},B_{m+1}$. Then through computer calculations we found that for $m<100,p>3$ we have:

\begin{align}
    &\bullet v_p(a_m)\text{ is the number of }k>0\text{ with }m=1, 2\lfloor \frac{p^k}{3}\rfloor\text{ mod }p^k\tag{4.3}\label{4.3}\\
    &\bullet v_p(b_m)\text{ is the number of }k>0\text{ with }m=2, 2\lfloor\frac{p^k}{3}\rfloor-1\text{ mod } p^k\tag{4.4}\label{4.4}\\
    &\bullet v_p(d_m)\text{ is the number of }k>0\text{ with }p^k=5\text{ mod }6\text{ and }m=\frac{2p^k-4}{3}, \frac{2p^k-1}{3}\text{ mod }p^k\tag{4.5}\label{4.5}\\
    &\bullet v_p(e_m)\text{ is the number of }k>0\text{ with }p^k=1\text{ mod }6\text{ and }m=\frac{2p^k-5}{3}, \frac{2p^k-2}{3}\text{ mod }p^k\tag{4.6}\label{4.6}
\end{align}

Now, recall we have $A_m(sB_m)-(sA_m)B_m=c_m\prod(x_i-x_j)^{2m+1}$ for some $c_m$ where $s\in S_3$ is any transposition. Then from computer calculations we also have
\begin{equation}\label{4.7}
    a_mb_mc_{m+1}=(m-1)(m-2)(3m+1)(3m+2)c_m\tag{4.7}
\end{equation}
\begin{equation}\label{4.8}
    d_me_mc_{m+1}=(3m+1)(3m+2)c_m\tag{4.8}
\end{equation}
up to factors of $2,3$. Note that $a_mb_mc_{m+1},d_me_mc_{m+1}$ are expressable in terms of the Opdam shift operator as the leading coefficients of
$$(O_{m+1}P_3A_m)(O_{m+1}P_3(sB_m))-(O_{m+1}P_3(sA_m))(O_{m+1}P_3B_m),$$$$(O_{m+1}P_2B_m)(O_{m+1}P_4(sA_m))-(O_{m+1}P_2(sB_m))(O_{m+1}P_4A_m),$$
respectively, which we conjecture can be expressed in terms of $A_m(sB_m)-(sA_m)B_m$. Moreover, if one can prove either \ref{4.3}, \ref{4.4}, and \ref{4.7} or \ref{4.5}, \ref{4.6}, and \ref{4.8} for all $m$, then we will know the prime factorization of $c_m$ for all $m$ up to factors of $2,3$ by induction. A straightforward calculation shows that the prime factorizations we would obtain from these proofs coincides with the conjecture of Ren and Xu. Thus if we combine this with the rest of the results from this section, a proof of the equations \ref{4.3}, \ref{4.4}, and \ref{4.7} or \ref{4.5}, \ref{4.6}, and \ref{4.8} would complete a proof of the Hilbert series of $
Q_m(3,\k)$ for all $m,\k$ with $\char\k\neq 2,3$.
\section{$q$-deformed quasi-invariants}\label{sec:5}

For this section we will mainly work over the complex numbers $\C$.

\begin{definition}
    For a nonzero complex number $q$, a polynomial $P\in\C[x_1,\dots,x_n]$ is called \linebreak
    $q$-deformed $m$-quasi-invariant if for all transpositions $s_{i,j}\in S_n$, $(1-s_{i,j})P$ is divisible by \linebreak
    $\prod_{k=-m}^m(x_i-q^kx_j)$. We denote by $Q_{m,q}(n)$ the space of $q$-deformed $m$-quasi-invariants.
\end{definition}

Note that when $q=1$ we recover ordinary quasi-invariants, i.e.~$Q_{m,1}(n)=Q_m(n,\C)$. In \cite{BEF} it was proved that $Q_{m,q}(n)$ is a flat deformation of $Q_m(n,\C)$ when $q$ is a formal parameter, i.e.~the Hilbert series of $Q_{m,q}(n)$ over $\C(q)$ is equal to the Hilbert series of $Q_m(n,\C)$. This implies that if $q\in\C$, the Hilbert series of $Q_{m,q}(n)$ is equal to the Hilbert series of $Q_m(n,\C)$ for all but a countable set of values for $q$. The main goal of this section is to work toward understanding the values of $q$ for which the Hilbert series of $Q_{m,q}(n)$ differs. The following lemma uses the above to relate the various forms of quasi-invariants introduced in this paper.

\begin{lemma}
    \begin{enumerate}
        \item The Hilbert series of $Q_{m,q}(n)$ is at least as big as the Hilbert series of $Q_m(n,\C)$ in all degrees.\label{lem:5.21}
        \item Let $p$ be a prime, $q$ a $p$-th root of unity, and $\k$ a field of characteristic $p$. Then the Hilbert series of $Q_m(n,\k)$ is at least as big as the Hilbert series of $Q_{m,q}(n)$ in all degrees.\label{lem:5.22}
    \end{enumerate}
\end{lemma}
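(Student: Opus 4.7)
The plan is to treat both inequalities as instances of upper semicontinuity of kernel dimensions for the linear system defining $Q_{m,q}(n)[d]$, viewed as a family in the parameter $q$. Fix $d$ and note that the condition $P \in Q_{m,q}(n)[d]$ is a system of linear equations in the coefficients of $P$ whose matrix $\Lambda(q)$ has entries in $\Z[q]$: since $\prod_{k=-m}^m(x_i - q^k x_j)$ is monic of degree $2m+1$ in $x_i$, polynomial division in $x_i$ yields a well-defined $\Z[q]$-linear remainder map, and the condition is that the remainder of $(1-s_{i,j})P$ vanish for every transposition $s_{i,j}$. For part \ref{lem:5.21}, I would view $\Lambda(q)$ as a matrix over $\C[q]$; by the BEF flatness result cited above, its generic rank over $\C(q)$ equals $N - \dim Q_m(n,\C)[d]$, where $N = \dim_{\C}\C[x_1,\dots,x_n][d]$. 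For any specialization $q\mapsto q_0\in\C^\times$, the rank of $\Lambda(q_0)\in\mathrm{Mat}(\C)$ can only drop (any subminor remaining nonzero after specialization was already generically nonzero), so the kernel dimension $\dim Q_{m,q_0}(n)[d]$ can only grow, giving the claim.

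For part \ref{lem:5.22}, I would work over the cyclotomic ring $R = \Z[\zeta_p]$ with the prime $\mathfrak{p} = (\zeta_p - 1)$; since $\Phi_p(1) = p$ we have $\mathfrak{p} \cap \Z = (p)$ and $R/\mathfrak{p} \cong \mathbb{F}_p$, with $\zeta_p \mapsto 1$. The matrix $\Lambda(\zeta_p)$ has entries in $R$; its kernel tensored with $\mathrm{Frac}(R) \hookrightarrow \C$ is $Q_{m,\zeta_p}(n)[d]$, while reducing modulo $\mathfrak{p}$ gives $\Lambda(1)\in\mathrm{Mat}(\mathbb{F}_p)$, whose kernel is $Q_m(n,\mathbb{F}_p)[d]$ --- each factor $x_i - \zeta_p^k x_j$ reduces to $x_i - x_j$, so the $q$-deformed divisibility condition reduces to the ordinary $(x_i-x_j)^{2m+1}$-divisibility condition. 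Any nonzero minor of $\Lambda(1)$ over $\mathbb{F}_p$ lifts to a minor of $\Lambda(\zeta_p)\in\mathrm{Mat}(R)$ that is nonzero modulo $\mathfrak{p}$, hence nonzero in $R$, giving $\mathrm{rank}_{\mathbb{F}_p}\Lambda(1) \leq \mathrm{rank}_{\mathrm{Frac}(R)}\Lambda(\zeta_p)$, and therefore $\dim Q_m(n,\mathbb{F}_p)[d] \geq \dim Q_{m,\zeta_p}(n)[d]$. Finally, the same rank argument applied to the inclusion $\mathbb{F}_p\hookrightarrow\k$ shows $\dim Q_m(n,\k)[d] = \dim Q_m(n,\mathbb{F}_p)[d]$, completing the chain.

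The delicate point, and what I expect to be the main obstacle, is verifying that reduction modulo $\mathfrak{p}$ really sends the $R$-kernel of $\Lambda(\zeta_p)$ into the $\mathbb{F}_p$-kernel of $\Lambda(1)$ --- equivalently, that polynomial division by $\prod_k(x_i - \zeta_p^k x_j)$ commutes with mod-$\mathfrak{p}$ reduction. This rests on the monicity of that product in $x_i$ over $R$, which makes the division algorithm well-defined integrally and compatible with base change. Once this compatibility is established, the rest is routine semicontinuity of matrix rank.
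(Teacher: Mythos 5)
Your proposal is correct and follows essentially the same route as the paper: both parts are proved by realizing $Q_{m,q}(n)[d]$ as the kernel of a matrix with entries in $\Z[q,q^{-1}]$ and invoking upper semicontinuity of kernel dimension under specialization --- first $\mathbf{q}\mapsto q$ over $\C$ (combined with the BEF flatness result), then reduction modulo the prime above $p$, under which $q\mapsto 1$ and the $q$-deformed divisibility condition becomes the ordinary one. Your treatment is somewhat more careful than the paper's (working explicitly over $\Z[\zeta_p]$ with the prime $(\zeta_p-1)$ and noting that monicity of $\prod_k(x_i-q^kx_j)$ in $x_i$ makes the defining matrix and its reduction well defined), but the underlying argument is the same.
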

\begin{proof}
    Fix a degree $d$, and consider the space $Q_{m,\b{q}}(n)[d]$ over $\C(\b{q})$ where $\b{q}$ is a formal parameter. Note that the conditions for a polynomial to be in $Q_{m,\b{q}}(n)[d]$ can be described as a system of linear equations with coefficients in $\Z[\b{q},\b{q}^{-1}]$, so $Q_{m,\b{q}}(n)[d]$ can be described as the nullspace of a matrix with entries in $\Z[\b{q},\b{q}^{-1}]$. $Q_{m,q}(n)[d]$ is then the nullspace of the same matrix when we specialize $\b{q}=q$. Specializing a matrix can only increase the dimension of its nullspace, which proves \ref{lem:5.21}.
    
    Now, consider when $q$ is a $p$-th root of unity. Then note that $Q_{m,q}(n)[d]$ is the nullspace of a matrix with entries in $\Z[q,q^{-1}]$. If we specialize this matrix to a matrix in $\Z/p\Z[q,q^{-1}]$, the nullspace of this matrix becomes the space of polynomials in $\Z/p\Z[q,q^{-1}][x_1,\dots,x_n]$ such that $(1-s_{i,j})P$ is divisible by ${\prod_{k=-m}^m(x_i-q^kx_j)}$ for all $s_{i,j}$. But $q$ satisfies $q^p-1=0$, and in characteristic $p$ we have $q^p-1=(q-1)^p=0$, so $q=1$. Thus we obtain that this nullspace is exactly $Q_m(n,\k)[d]$, which proves \ref{lem:5.22}.
\end{proof}

Many basic properties of quasi-invariants also hold for $q$-deformed quasi-invariants with the obvious modifications and analogous proofs. Proposition \ref{prop:2.2}, Lemma \ref{lem:2.4}, and Proposition \ref{prop:2.5} directly translate from $Q_m(n,\k)$ to $Q_{m,q}(n)$. Analogously to Proposition 2.6, we have

\begin{proposition}\label{prop:5.3}
    As $\k[x_1,\dots,x_n]^{S_n}$-modules, we have
    \begin{enumerate}
        \item $Q_{m,q}(n)_\triv$ is freely generated by $1$.
        \item $Q_{m,q}(n)_\sgn$ is freely generated by $\prod_{i<j}\prod_{k=-m}^m(x_i-q^kx_j)$.
    \end{enumerate}
\end{proposition}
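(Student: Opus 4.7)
The plan is to mirror the proof of Proposition \ref{prop:2.6}, which handled the $q=1$ case, with the only nontrivial content being the verification that the claimed generator $D := \prod_{i<j}\prod_{k=-m}^m(x_i-q^kx_j)$ transforms under $S_n$ by the sign character. Throughout, I work over $\C$, and I use that the analogue of Proposition \ref{prop:2.5} holds for $Q_{m,q}(n)$ so that the isotypic decomposition makes sense.

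Part (1) is essentially automatic: a polynomial $P$ lies in $Q_{m,q}(n)_\triv$ precisely when $S_n$ acts trivially on it, i.e., when $P$ is $S_n$-invariant. For any such $P$, $(1-s_{i,j})P = 0$ is trivially divisible by $\prod_{k=-m}^m(x_i - q^k x_j)$, so $Q_{m,q}(n)_\triv = \C[x_1,\dots,x_n]^{S_n}$, freely generated by $1$.

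For part (2), let $P \in Q_{m,q}(n)_\sgn$. Then $s_{i,j}P = -P$ implies $(1-s_{i,j})P = 2P$ is divisible by $\prod_{k=-m}^m(x_i - q^k x_j)$ for every transposition. The first step is to observe that the inner products associated with distinct pairs $\{i,j\} \neq \{i',j'\}$ are pairwise coprime in $\C[x_1,\dots,x_n]$, since any two linear factors $x_i - q^k x_j$ and $x_{i'} - q^\ell x_{j'}$ involving distinct pairs are non-proportional. Hence $D \mid P$, so we may write $P = KD$. The second step is to establish $s_{i,j}D = -D$ for each transposition. For the factor at the pair $\{i,j\}$ itself, one computes
\[\prod_{k=-m}^m(x_j - q^k x_i) = (-1)^{2m+1} q^{\sum_{k=-m}^m k}\prod_{k=-m}^m(x_i - q^{-k} x_j) = -\prod_{k=-m}^m(x_i - q^k x_j),\]
using $\sum_{k=-m}^m k = 0$ and the reindexing $k \mapsto -k$. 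For every $\ell \notin \{i,j\}$, the pairs $\{i,\ell\}$ and $\{j,\ell\}$ are swapped by $s_{i,j}$; a short case check on the relative ordering of $i, j, \ell$ shows that any sign flips picked up when rewriting the image factors in the convention ``smaller index first'' occur in matched pairs and cancel. Hence the only net sign contribution comes from the fixed pair $\{i,j\}$, giving $s_{i,j}D = -D$. Then $s_{i,j}P = -P$ together with $P = KD$ and $s_{i,j}D = -D$ forces $s_{i,j}K = K$ for every transposition, so $K \in \C[x_1,\dots,x_n]^{S_n}$. The reverse inclusion is immediate: for any symmetric $K$, we have $s_{i,j}(KD) = -KD$ and $(1-s_{i,j})(KD) = 2KD$ is divisible by the required product, so $KD \in Q_{m,q}(n)_\sgn$. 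Freeness follows from $\C[x_1,\dots,x_n]$ being an integral domain.

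The main obstacle, modest in size, is the bookkeeping for $s_{i,j}D = -D$ when $n \geq 4$: one must track how $s_{i,j}$ permutes the ${n \choose 2}$ inner products while simultaneously reordering indices within them against the convention $i<j$. The calculation is purely combinatorial and hinges only on $\sum_{k=-m}^m k = 0$ and $(-1)^{2m+1} = -1$; no $q$-specific obstruction arises beyond the hypothesis $q \neq 0$, and once the orbit structure of $s_{i,j}$ on pairs is isolated (one fixed pair and $n-2$ orbits of size $2$), everything reduces to the single-factor computation above.
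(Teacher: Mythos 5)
Your proof is correct and follows exactly the route the paper intends: the paper gives no separate argument for Proposition \ref{prop:5.3}, simply asserting it is proved ``analogously to Proposition \ref{prop:2.6},'' and your write-up is precisely that analogue, with the one genuinely new detail (the sign computation $s_{i,j}\prod_{k=-m}^m(x_i-q^kx_j)=-\prod_{k=-m}^m(x_i-q^kx_j)$ via $\sum_{k=-m}^m k=0$ and the pairwise coprimality of the products attached to distinct pairs) correctly worked out.
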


Thus by the same argument as in Remark \ref{rem:2.7}, we have

\begin{corollary}
    The Hilbert series of $Q_{m,q}(2)$ is $\frac{1+t^{2m+1}}{(1-t)(1-t^2)}$ for all $q$.
\end{corollary}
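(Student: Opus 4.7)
The plan is to imitate the argument of Remark \ref{rem:2.7} essentially verbatim, with Proposition \ref{prop:5.3} playing the role of Proposition \ref{prop:2.6}. The key structural input is that $S_2$ has exactly two irreducible representations, $\triv$ and $\sgn$. The $q$-deformed analogue of Proposition \ref{prop:2.5} (which, as noted just before Proposition \ref{prop:5.3}, goes through by the same proof) then gives the isotypic decomposition
$$Q_{m,q}(2) \;=\; Q_{m,q}(2)_\triv \,\oplus\, Q_{m,q}(2)_\sgn$$
as $\C[x_1,x_2]^{S_2}$-modules, with no other summands to worry about.

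Next I would apply Proposition \ref{prop:5.3} to pin down explicit free generators of each summand. The trivial component is freely generated by $1$, of degree $0$, and the sign component is freely generated by
$$g_q(x_1,x_2) \;=\; \prod_{k=-m}^{m}(x_1 - q^k x_2),$$
which for every $q \in \C^*$ is a homogeneous polynomial of degree exactly $2m+1$, since each factor is linear in the $x_i$ (individual factors may coincide when $q$ is a root of unity, but the total degree is unaffected). The ambient invariant ring $\C[x_1,x_2]^{S_2} = \C[x_1+x_2,\,x_1x_2]$ is a polynomial algebra on generators of degrees $1$ and $2$, with Hilbert series $1/((1-t)(1-t^2))$.

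Combining, each free rank-one summand contributes the Hilbert series of $\C[x_1,x_2]^{S_2}$ shifted by the degree of its generator, so
$$H_{m,q}(t) \;=\; \frac{1}{(1-t)(1-t^2)} + \frac{t^{2m+1}}{(1-t)(1-t^2)} \;=\; \frac{1+t^{2m+1}}{(1-t)(1-t^2)}.$$
The only point requiring a moment of thought is that the two generators lie in non-isomorphic isotypic components and therefore satisfy no relations over $\C[x_1,x_2]^{S_2}$, but this is immediate since $\triv \not\cong \sgn$. There is essentially no obstacle here: uniformity in $q$ reduces to the trivial fact that $\deg g_q = 2m+1$ for every $q \in \C^*$, so no subtlety arises at roots of unity or other special values.
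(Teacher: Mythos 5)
Your proposal is correct and follows exactly the route the paper takes: the corollary is derived by the same argument as Remark \ref{rem:2.7}, using Proposition \ref{prop:5.3} to identify the free generators $1$ and $\prod_{k=-m}^{m}(x_1-q^kx_2)$ of the two isotypic components and noting that the latter has degree $2m+1$ for every $q$. Your added remark that the degree is uniform in $q$ even at roots of unity is a sensible (if implicit in the paper) observation, but there is nothing here that diverges from the paper's proof.
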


Similarly, Lemma \ref{lem:3.2}, Corollary \ref{cor:3.3}, Lemma \ref{lem:3.4}, and Lemma \ref{lem:3.5} all hold for $Q_{m,q}(3)_\std$ as well if all instances of $(x_i-x_j)^{2m+1}$ are replaced with $\prod_{k=-m}^m(x_i-q^kx_j)$ with the same proofs. We also have the following theorem analogous to Theorem \ref{thm:4.1}.

\begin{theorem}
    Let $V,W$ be the generating representations of $Q_{m,\b{q}}(3)$ and let $A\in V_{1,2}^-,B\in W_{1,2}^-$ where the coefficients of each of $A,B$ are coprime elements of $\C[\b{q},\b{q}^{-1}]$. Let 
    $$As_{1,2}B-Bs_{1,2}A=c\prod_{i<j}\prod_{k=-m}^m(x_i-q^kx_j).$$
    where $c\in\C[\b{q},\b{q}^{-1}]$. Then the Hilbert series of $Q_{m,q}(3)$ is different from the Hilbert series of $Q_m(3,\C)$ if and only if $q$ is a root of $c$.
\end{theorem}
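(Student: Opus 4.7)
My plan is to follow the blueprint of the proof of Theorem~\ref{thm:4.1}, adapting each step to the $q$-setting. The tools are: the flatness result of \cite{BEF}, which identifies $A$ and $B$ as free generators of $Q_{m,\b{q}}(3)_\std$ over $\C(\b{q})$ of degrees $3m+1$ and $3m+2$; the $q$-analogues of Lemmas~\ref{lem:3.2}, \ref{lem:3.4}, and \ref{lem:3.5} asserted in the paragraph above; and a $q$-analogue of Theorem~\ref{thm:3.1} stating that $Q_{m,q}(3)_\std$ is freely generated by two standard representations of degrees $d$ and $6m+3-d$ with $2m+1 \le d \le 3m+1$. The $q$-version of Theorem~\ref{thm:3.1} itself would need to be established along the way; since Lemmas~\ref{lem:3.2}--\ref{lem:3.5} already transfer, the main missing ingredient is a $q$-compatible replacement for Lemma~\ref{lem:3.6}.

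For the forward direction, I would assume the Hilbert series of $Q_{m,q}(3)$ differs from that of $Q_m(3,\C)$. By Lemma~\ref{lem:5.21} the $q$-deformed series is pointwise at least as large, so the discrepancy is strict in some degree, which by the $q$-version of Theorem~\ref{thm:3.1} forces $d < 3m+1$ and hence $6m+3-d > 3m+2$. The degrees $3m+1$ and $3m+2$ of $A_q$ and $B_q$ then lie strictly between $d$ and $6m+3-d$, so both elements must lie in the rank-one submodule of $Q_{m,q}(3)_\std$ generated by the minimal-degree generator $U$. Writing $A_q = S_1 U$ and $B_q = S_2 U$ for symmetric polynomials $S_1, S_2$ and using that $s_{1,2}$ commutes with multiplication by symmetric polynomials, the bracket $A_q s_{1,2}B_q - B_q s_{1,2}A_q$ collapses to $S_1 S_2(U s_{1,2}U - U s_{1,2}U) = 0$, so $c(q) = 0$.

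For the backward direction, I would start from $c(q)=0$, which says $A_q s_{1,2}B_q - B_q s_{1,2}A_q = 0$. The $q$-version of Lemma~\ref{lem:3.4} prevents $A_q$ and $B_q$ from both being generators of $Q_{m,q}(3)_\std$, and by the coprime-coefficient hypothesis neither is zero. If $A_q$ is not a generator, then a generator of degree strictly less than $3m+1$ must exist, which immediately gives a strictly larger Hilbert series. If $A_q$ is a generator but $B_q$ is not, then $B_q$ is generated by lower-degree elements; the only possibility at degree at most $3m+1$ is $B_q = \alpha(x_1+x_2+x_3)A_q$ for some $\alpha \in \C$, since $x_1+x_2+x_3$ is the unique degree-one symmetric polynomial.

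The principal obstacle is disposing of this last case. In Theorem~\ref{thm:4.1}, it was killed by the normalization $B \in \k[x_1-x_3, x_2-x_3]$, which relies on the decomposition $\k[x_1,x_2,x_3] = \k[x_1+x_2+x_3] \otimes \k[x_1-x_3, x_2-x_3]$ being compatible with quasi-invariance; this breaks in the $q$-setting because $\prod_{k=-m}^m(x_i - q^k x_j) \notin \C[x_1-x_3, x_2-x_3]$ for $q \neq 1$. My plan to handle it is to introduce the auxiliary element $B' := (B - \alpha(\b{q})(x_1+x_2+x_3)A)/(\b{q}-q)$ over $\C[\b{q},\b{q}^{-1}]$, verify $B' \in Q_{m,\b{q}}(3)$ by noting that $\b{q}-q$ is coprime to $\prod_k(x_i - \b{q}^k x_j)$ so the divisibility condition on $(1-s_{ij})B'$ passes to the quotient, and then examine the specialization $B'(q)$; if $B'(q)$ is again of the form $\beta(x_1+x_2+x_3)A_q$ one iterates the construction, and the coprime-coefficient normalization of $B$ should force the iteration to terminate in a genuine new generator at a degree strictly less than $3m+1$, thereby forcing the Hilbert series to jump. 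Making this termination argument rigorous, and ensuring the alternative scenario (a new standard generator appearing at degree exactly $3m+2$ while keeping the total Hilbert series unchanged) cannot occur, is the key technical step where the most care will be required.
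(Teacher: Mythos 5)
Your overall blueprint matches the paper's, but there are two genuine gaps. First, your forward direction routes through the $q$-analogue of Theorem~\ref{thm:3.1}, which is precisely what the paper cannot prove: the $q$-analogue of Lemma~\ref{lem:3.6} is unavailable (as the paper notes, $\prod_{k=-m}^m(x_i-q^kx_j)$ does not live in a Krull-dimension-$2$ subalgebra), and Theorem~\ref{thm:3.1} for $Q_{m,q}(3)$ is stated only as a conjecture. The detour is also unnecessary: the $q$-analogue of Lemma~\ref{lem:3.5}, which does transfer, already shows that if the Hilbert series differs then $A_q$ and $B_q$ cannot both be generators (otherwise $Q_{m,q}(3)_\std$ would be free on them and the Hilbert series would agree with characteristic $0$), and Lemma~\ref{lem:3.4} then forces both into the submodule generated by a single representation of lower degree, which kills the bracket.

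Second, and more seriously, your treatment of the case $B_q=\alpha(x_1+x_2+x_3)A_q$ does not close. Dividing by $\b{q}-q$ does not lower the degree in the $x_i$, so your iteration only ever produces elements of degree $3m+2$; it cannot ``terminate in a genuine new generator at a degree strictly less than $3m+1$.'' If the iteration terminates with some $B^{(k)}(q)$ independent of $(x_1+x_2+x_3)A_q$, that element is exactly the ``new standard generator at degree $3m+2$'' of the scenario you flag at the end: there $A_q$ and $B^{(k)}(q)$ would freely generate $Q_{m,q}(3)_\std$ by Lemma~\ref{lem:3.5}, the Hilbert series would be unchanged, and yet $c(q)=0$ --- i.e., the scenario in which the theorem is false. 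So the iteration is consistent with the very case you must exclude, and by your own admission you have no argument ruling it out. The paper's resolution is different in substance: it introduces the lattice $M=Q_{m,\b{q}}(3)[3m+2]\cap\C[\b{q}][x_1,\dots,x_n]$ and its fiber $N=M/(\b{q}-q)M\subset Q_{m,q}(3)[3m+2]$, and runs a descent on the $\b{q}$-degrees of a basis to show that $\dim_\C N$ equals the generic dimension, concluding that the specializations of $(x_1+x_2+x_3)A$ and $B$ remain linearly independent, so the relation $B_q=\alpha(x_1+x_2+x_3)A_q$ never occurs in the first place. That flatness-in-degree-$(3m+2)$ argument is the ingredient your proposal is missing.
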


\begin{proof}
    First, assume that the Hilbert series of $Q_{m,q}(3)$ is different from the Hilbert series of $Q_m(3,\C)$. Then one of the specializations of $A,B$ in $Q_{m,q}(3)$ is not a generating representation, so it is generated by a representation of lower degree. Without loss of generality, say that $A$ is generated by $C$. Then since $\deg A+\deg B=6m+3$, $\deg C+\deg B<6m+3$, so by Lemma \ref{lem:3.4} $B$ is also generated by $C$. But then $A,B$ are not independent over $\C[x_1,x_2,x_3]^{S_3}$, so $As_{1,2}B-Bs_{1,2}A=0$ and thus $c(q)=0$.
    
    Now, assume $q$ is a root of $c$. Then the specializations of $A,B$ in $Q_{m,q}(3)$ are not both generators by Lemma 3.4, and they are nonzero since their coefficients are coprime. Let us assume \linebreak
    $\deg A=3m+1,\deg B=3m+2$. If $A$ is not a generator then it is generated by some representation of degree less than $3m+1$, so the Hilbert series of $Q_{m,q}(3)$ is larger than the Hilbert series of $Q_m(3,\C)$ since no such generator exists in $Q_m(3,\C)$. If $B$ is not a generator then it is generated by some generator of degree less than $3m+2$. If this representation is not $A$ then we are done by the same argument as above. Note that if it is $A$, then up to a scalar we have $(x_1+x_2+x_3)A=B$ in $Q_{m,q}(3)$ since $x_1+x_2+x_3$ is the only symmetric polynomial of degree 1. We will show that this cannot happen, which will complete the proof.
    
    Let $M=Q_{m,\b{q}}(n)[3m+2]\cap\C[\b{q}][x_1,\dots,x_n]$ be a $\C[\b{q}]$-module. Then $N=M/(\b{q}-q)M$ is a module over $\C[\b{q}]/(\b{q}-q)\cong\C$, and it is easy to see that it is a submodule of $Q_{m,q}(n)[3m+2]$ that contains the specializations of $(x_1+x_2+x_3)A,B$ to $Q_{m,q}(n)$. 
    
    Let $e_1,\dots,e_k\in M$ be a basis of $Q_{m,\b{q}}(n)[3m+2]$ such that the images of $e_1,\dots,e_k$ in $N$ are nonzero. We will construct a basis of $k$ elements in $N$ using infinite descent on the sum of the degrees of $e_1,\dots,e_k$ in $\b{q}$. Let $\bar{e}_1,\dots,\bar{e}_k$ be the images of $e_1,\dots,e_k$ in $N$ and assume $\bar{e}_1,\dots,\bar{e}_k$ satisfy some linear relation $\sum_{i=1}^ka_i\bar{e}_i=0$ where $a_i\in\C$. Let $j$ be an index such that $e_j$ has the largest degree in $\b{q}$ out of all $e_i$ such that $a_i\neq 0$. Then $\sum_{i=1}^ka_ie_i$ is divisible by $\b{q}-q$ and has degree at most the degree of $e_j$. Since $e_1,\dots,e_k$ form a basis of $Q_{m,\b{q}}(n)[3m+2]$, it follows that  $e_1,\dots,\hat{e_j},\dots,e_k,\frac{1}{\b{q}-q}\sum_{i=1}^ka_ie_i$ is also a basis of $Q_{m,\b{q}}(n)[3m+2]$ that lies in $M$. $\frac{1}{\b{q}-q}\sum_{i=1}^ka_ie_i$ has a lower degree than $e_j$, so we have constructed a basis of $Q_{m,\b{q}}(n)[3m+2]$ that has a lower sum of degrees in $\b{q}$ than $e_1,\dots,e_k$. Since the sum of degrees must be nonnegative, this process must eventually terminate, after which there can be no relations between $\bar{e}_1,\dots,\bar{e}_k$. After this process, since $e_1,\dots,e_k$ form a basis of $M$, $\bar{e}_1,\dots,\bar{e}_k$ span $N$, so $N$ has dimension equal to $M$. Note that this implies $(x_1+x_2+x_3)A$ and $B$ cannot be linearly dependent in $N$, as they are linearly independent in $M$, so we have the desired result.
\end{proof}

However, the proof of Lemma \ref{lem:3.6} does not immediately generalize to the $q$-deformed case. This is because unlike $(x_i-x_j)^{2m+1}$, $\prod_{k=-m}^m(x_i-q^kx_j)$ is not contained in a subalgebra of $\C[x_1,x_2,x_3]$ of Krull dimension 2, and the 2 dimensional subalgebra is key to the proof. As such, we were unable to prove Lemma \ref{lem:3.6} and thus we do not know if Theorem \ref{thm:3.1} holds for the $q$-deformed case. However, based on our calculations of specific cases we conjecture that Theorem \ref{thm:3.1} indeed holds for $Q_{m,q}(3)$. 

Recall that in \cite{Ren_2020} Ren and Xu proved a sufficient condition for the Hilbert series of $Q_m(n,\k)$ to differ from the Hilbert series of $Q_m(n,\C)$. Using their methods, we proved a similar sufficient condition in the $q$-deformed case.

\begin{theorem}\label{thm:5.6}
    Let $m\geq0$, $n\geq 3$ be integers, and let $q$ be a primitive $p$-th root of unity, where $p$ is an integer such that
    $$\frac{mn(n-2)+{n\choose{2}}}{{n\choose{2}}-1}\leq p\leq mn.$$
    Then the Hilbert series of $Q_{m,q}(n)$ is different from the Hilbert series of $Q_m(n,\C)$.
\end{theorem}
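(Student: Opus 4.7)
The plan is to adapt the construction Ren and Xu use in \cite{Ren_2020} to prove the characteristic-$p$ analog (the case $k=0$, $a=1$ of (\ref{eq:4.1})), transplanted to the $q$-deformed setting. The key identity connecting the two settings is
\[
x_i^p - x_j^p = \prod_{k=0}^{p-1}(x_i - q^k x_j),
\]
valid whenever $q$ is a primitive $p$-th root of unity. This plays the role in the $q$-deformed setting that the Frobenius identity $(x_i - x_j)^p = x_i^p - x_j^p$ plays in characteristic $p$, allowing constructions built from Frobenius-twisted factors in characteristic $p$ to be lifted verbatim (with integer coefficients) to $\C[x_1, \dots, x_n]$.

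Concretely, I would first extract from \cite{Ren_2020} the explicit polynomial $\tilde F \in \Z[x_1, \dots, x_n]$ whose mod-$p$ reduction lies in $Q_m(n, \mathbb{F}_p)$ and has degree at most $mn$; it is built out of Frobenius-twisted factors such as $x_\ell^p - x_r^p$, and in the simplest subcase reduces to something like $\tilde F = x_1^p$ (or a symmetrization thereof). Second, I would view $\tilde F$ as an element of $\C[x_1, \dots, x_n]$ via its integer coefficients and verify $\tilde F \in Q_{m,q}(n)$: for each transposition $s_{i,j}$, the image $(1 - s_{i,j})\tilde F$ is a $\C$-combination of terms each divisible by some $x_\ell^p - x_r^p$, which by the factorization identity equals $\prod_{k=0}^{p-1}(x_\ell - q^k x_r)$. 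The required divisor $\prod_{k=-m}^m(x_i - q^k x_j)$ is a sub-multiset of these factors, so divisibility is transparent when $2m+1 \le p$ and follows from a multiplicity count when $2m+1 > p$. Third, since $\deg \tilde F \le mn$ is strictly smaller than the smallest non-triv, non-sgn generator degree of $Q_m(n, \C)$ (for $n = 3$ this is $3m+1 > 3m = mn$ by (\ref{eq:3.1})), and since Proposition \ref{prop:5.3} matches the triv and sgn isotypic components of $Q_{m,q}(n)$ with those of $Q_m(n, \C)$, the element $\tilde F$ contributes a genuinely new basis vector to $Q_{m,q}(n)[\deg \tilde F]$, proving the desired strict inequality of Hilbert series.

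The main obstacle is the subrange $p < 2m+1$ of the hypothesis (which occurs in the allowed range once $m \ge 2$, $n \ge 3$): here the divisor $\prod_{k=-m}^m(x_i - q^k x_j)$ has repeated factors modulo $p$, and the naive lift of $\tilde F$ need not immediately satisfy the divisibility condition. One must verify by a careful multiplicity count, using the explicit form of $\tilde F$ from \cite{Ren_2020}, that $(1 - s_{i,j}) \tilde F$ contains each $(x_i - q^k x_j)$-factor with the required multiplicity. This bookkeeping is the technical heart of the argument and parallels the analogous step in the characteristic-$p$ proof.
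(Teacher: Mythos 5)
Your outline follows the same route as the paper's proof: exhibit an explicit non-symmetric element of $Q_{m,q}(n)$ of degree at most $mn$, invoke the Felder--Veselov result that every non-trivial generator of $Q_m(n,\C)$ has degree at least $mn+1$, and conclude via Lemma 5.2 that the Hilbert series differ. The identity $x_i^p-x_j^p=\prod_{k=0}^{p-1}(x_i-q^kx_j)$ is indeed the engine of the argument. There are, however, two places where the plan as written does not reach the statement. First, the theorem allows $p$ to be \emph{any} integer in the stated range, not just a prime (the paper stresses this immediately after the theorem), so there is in general no characteristic-$p$ construction of Ren--Xu to transplant; the witness must be built directly. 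This is not hypothetical: for $n=3$, $m=3$ the hypothesis forces $p=6$. Second, the subrange $p<2m+1$ that you defer to a ``careful multiplicity count'' is exactly where the construction has content. The paper's witness there is $(x_1^p-x_2^p)\prod_{i\neq j}\prod_{k=(p+1)/2}^{m}(x_i-q^kx_j)$ for odd $p$, where the correction product is symmetric and supplies the factors $x_i-q^{\pm k}x_j$ for $(p-1)/2<k\le m$ that a single full cycle $x_i^p-x_j^p$ misses (using $x_j-q^kx_i=-q^k(x_i-q^{-k}x_j)$); for even $p$ one must additionally insert $\prod_{i<j}(x_i+x_j)$ to account for the factor $x_i-q^{p/2}x_j$, which cannot be paired symmetrically about $k=0$. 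Without writing down these correction factors, checking they are symmetric (so that divisibility of $(1-s_{i,j})P$ reduces to divisibility of the $x_1^p-x_2^p$ part), and verifying the degree bound $p+\binom{n}{2}(2m+1-p)\le mn$ for the \emph{corrected} polynomial --- an inequality precisely equivalent to the lower bound on $p$ in the hypothesis --- the argument is incomplete.
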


Note the similarity between the inequality in this theorem and \ref{eq:4.1}. Namely, this inequality is obtained from \ref{eq:4.1} when we set $a=1,k=0$, but without the restriction that $p$ must be prime.

\begin{proof}
    In \cite{felder2003action} it was proved that the smallest positive degree term in the Hilbert series of $Q_m(n,\C)$ has degree $mn+1$. For the values of $q$ described above, we will exhibit a nonsymmetric $q$-deformed $m$-quasi-invariant of degree less than $mn+1$, which suffices for the proof.
    
    Let $q$ satisfy the condition in the theorem, and consider the polynomial 
    \begin{equation}\label{eq:5.1}
        P_{m,q}=\begin{cases}x_1^p-x_2^p&p\geq 2m+1\\(x_1^p-x_2^p)\prod_{i\neq j}\prod_{k=\frac{p+1}{2}}^m(x_i-q^kx_j)&p<2m+1, p\text{ odd}\\(x_1^p-x_2^p)\prod_{i<j}(x_i+x_j)\prod_{i\neq j}\prod_{k=\frac{p+2}{2}}^m(x_i-q^kx_j)&p<2m+1, p\text{ even}
        \end{cases}.\tag{5.1}
    \end{equation}
    Note that in both cases where $p<2m+1$, we have 
    $$\deg P_{m,q}=p+{n\choose{2}}(2m+1-p)\leq{n\choose{2}}(2m+1)-mn(n-2)-{n\choose{2}}=mn$$
    so $P_{m,q}$ always has degree less than $mn+1$. It suffices to show that $P_{m,q}\in Q_{m,q}(n)$. First, note that since $q$ is a primitive $p$-th root of unity, we have $$x_i^p-x_j^p=\prod_{k=-\frac{p-1}{2}}^{\frac{p-1}{2}}(x_i-q^kx_j)$$
    if $p$ is odd and
    $$x_i^p-x_j^p=\prod_{k=-\frac{p}{2}}^{\frac{p}{2}-1}(x_i-q^kx_j)$$
    if $p$ is even. If $p\geq 2m+1$, then this implies $x_i^p-x_j^p$ is divisible by $\prod_{k=-m}^m(x_i-q^kx_j)$. If $p<2m+1$ and $p$ is odd, then $P_{m,q}$ is divisible by 
    $$\prod_{k=-\frac{p-1}{2}}^{\frac{p-1}{2}}(x_1-q^kx_2)\prod_{k=\frac{p+1}{2}}^m(x_1-q^kx_2)(x_2-q^kx_1)$$$$=\pm q^l\prod_{k=-\frac{p-1}{2}}^{\frac{p-1}{2}}(x_1-q^kx_2)\prod_{k=\frac{p+1}{2}}^m(x_1-q^kx_2)(x_1-q^{-k}x_2)=\pm q^l\prod_{k=-m}^m(x_1-q^kx_2)$$
    for some integer $l$, and if $p$ is even $P_{m,q}$ is divisible by
    $$\prod_{k=-\frac{p}{2}}^{\frac{p}{2}-1}(x_1-q^kx_2)(x_1+x_2)\prod_{k=\frac{p+2}{2}}^m(x_1-q^kx_2)(x_2-q^kx_1)$$$$=\pm q^l\prod_{k=-\frac{p}{2}}^{\frac{p}{2}-1}(x_1-q^kx_2)(x_1-q^{\frac{p}{2}}x_2)\prod_{k=\frac{p+2}{2}}^m(x_1-q^kx_2)(x_1-q^{-k}x_2)=\pm q^l\prod_{k=-m}^m(x_1-q^kx_2).$$
    Now, we will show that $P_{m,q}\in Q_{m,q}(n)$ when $p\geq 2m+1$. Let $i,j>2$. Then we have
    $$(1-s_{1,2})(x_1^p-x_2^p)=2(x_1^p-x_2^p),\quad(1-s_{1,i})(x_1^p-x_2^p)=x_1^p-x_i^p,$$
    $$(1-s_{2,i})(x_1^p-x_2^p)=x_i^p-x_2^p,\quad(1-s_{i,j})(x_1^p-x_2^p)=0,$$
    and the desired result follows. Note that the products 
    $$\prod_{i\neq j}\prod_{k=\frac{p+1}{2}}^m(x_i-q^kx_j),\quad \prod_{i<j}(x_i+x_j)\prod_{i\neq j}\prod_{k=\frac{p+2}{2}}^m(x_i-q^kx_j)$$
    are symmetric, so the cases where $p<2m+1$ follow similarly.
\end{proof}

\begin{conjecture}\label{cong:5.7}
    The condition given in Theorem \ref{thm:5.6} is also necessary. That is, if the Hilbert series of $Q_{m,q}(n)$ is different from the Hilbert series of $Q_m(n,\C)$, then $q$ is a primitive $p$-th root of unity where $p$ is an integer that satisfies
    $$\frac{mn(n-2)+{n\choose{2}}}{{n\choose{2}}-1}\leq p\leq mn.$$
\end{conjecture}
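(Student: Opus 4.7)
The plan is to prove the converse of Theorem \ref{thm:5.6} by reducing the question to the vanishing locus of a single explicit Laurent polynomial in $\b{q}$, then identifying that locus with the primitive roots of unity in the stated range. I will focus on $n=3$, where the machinery of Sections \ref{sec:3}--\ref{sec:4} transfers most directly; the general case appears to require tools beyond those developed in this paper.

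First I would push the matrix/specialization argument of Lemma \ref{lem:5.21} further. Fix $d$; the space $Q_{m,\b{q}}(n)[d]$ is the kernel of a matrix $M_d(\b{q})$ with entries in $\Z[\b{q},\b{q}^{-1}]$, and the Hilbert series of $Q_{m,q}(n)$ jumps at $q=q_0$ precisely when $\rk M_d(q_0) < \rk M_d(\b{q})$. For $n=3$, the $q$-deformed analogs of Lemmas \ref{lem:3.2}--\ref{lem:3.5} (which the paper states transfer verbatim) together with the $q$-deformed version of Theorem \ref{thm:4.1} reduce the entire question to whether the scalar $c_m(\b{q})\in\C[\b{q},\b{q}^{-1}]$ from that theorem vanishes at $q=q_0$. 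So it suffices to show that every root of $c_m(\b{q})$ is a primitive $p$-th root of unity for some integer $p$ with $\frac{3m+3}{2}\leq p\leq 3m$.

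Next I would develop a $q$-deformed Opdam shift operator $O_{m,q}$ out of the $q$-deformed Dunkl operators underlying the cyclotomic DAHA of \cite{BEF}, and verify $O_{m+1,q}(Q_{m,\b{q}}(3))\subset Q_{m+1,\b{q}}(3)$. Applied to $A_m,B_m$ together with $q$-analogs of the symmetric polynomials $P_2,P_3$, this should yield scalar multipliers $a_m(\b{q}),b_m(\b{q}),d_m(\b{q}),e_m(\b{q})\in\C[\b{q},\b{q}^{-1}]$ satisfying recursions
$$a_m(\b{q})b_m(\b{q})c_{m+1}(\b{q})=R_1(\b{q},m)\,c_m(\b{q}),\qquad d_m(\b{q})e_m(\b{q})c_{m+1}(\b{q})=R_2(\b{q},m)\,c_m(\b{q})$$
analogous to (\ref{4.7})--(\ref{4.8}), where $R_1,R_2$ are explicit rational functions specializing at $\b{q}=1$ to $(m-1)(m-2)(3m+1)(3m+2)$ and $(3m+1)(3m+2)$ respectively. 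Inductive unwinding of these recursions, combined with $q$-deformed analogs of (\ref{4.3})--(\ref{4.6}) for the individual factors, should express $c_m(\b{q})$ as a product of $q$-integer (cyclotomic) factors $1+\b{q}+\cdots+\b{q}^{p-1}$, whose roots are exactly the primitive $p'$-th roots of unity for $p'\mid p$. A degree count mirroring the construction of $P_{m,q}$ in Theorem \ref{thm:5.6} --- any $p$ outside the stated range would produce no polynomial of degree below $mn+1$ and hence no Hilbert-series change --- confines the relevant $p$ to the conjectured interval.

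The main obstacle, I expect, is the construction and explicit analysis of $O_{m,q}$. In the undeformed case the leading-coefficient identities (\ref{4.3})--(\ref{4.6}), (\ref{4.7})--(\ref{4.8}) were obtained by computer experiment and remain unproven in this paper, so establishing their $q$-deformed analogs in closed form is at least as hard; the combinatorics of when $q$-numbers vanish is richer than that of when $p^k$ divides an integer. A secondary difficulty is that Lemma \ref{lem:3.6} has no direct $q$-analog, which prevents the clean induction underlying Theorem \ref{thm:3.1}; however, since Conjecture \ref{cong:5.7} concerns only equality of Hilbert series rather than exact generator degrees, it is plausible that a closed-form cyclotomic factorization of $c_m(\b{q})$ alone suffices, bypassing Lemma \ref{lem:3.6} entirely. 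For general $n$, even the characteristic-zero Hilbert series relies on the Knizhnik-Zamolodchikov equations of \cite{felder2003action} with no explicit generators in hand, so the above strategy would first need a replacement for the degree analysis of Lemma \ref{lem:3.4}.
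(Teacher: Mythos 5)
The statement you are trying to prove is stated in the paper as a conjecture, not a theorem: the paper offers no proof, only direct computational verification for $n=3$ and $m\leq 3$ (by reducing the existence of a degree-$3m$ element of $Q_{m,q}(3)_\std$ to a polynomial equation in $q$ via Lemma \ref{lem:3.2}). Your proposal is therefore not being measured against an argument in the paper, and it should be judged on its own terms --- and on those terms it is a research program rather than a proof. The reduction you describe for $n=3$ is legitimate given the paper's results: flatness over $\C(\b{q})$ from \cite{BEF} gives generators of $Q_{m,\b{q}}(3)_\std$ in degrees $3m+1$ and $3m+2$, and the $q$-deformed analog of Theorem \ref{thm:4.1} proved in Section \ref{sec:5} shows the Hilbert series jumps at $q=q_0$ exactly when $c_m(q_0)=0$. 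But the entire substance of the conjecture for $n=3$ is then the claim that the zero locus of $c_m(\b{q})$ consists precisely of the primitive $p$-th roots of unity with $\frac{3m+3}{2}\leq p\leq 3m$, and every step you propose toward that claim is itself conjectural: the identities (\ref{4.3})--(\ref{4.8}) are unproven even at $\b{q}=1$ (the paper only reports computer verification for $m<100$), a $q$-deformed Opdam shift operator with the required properties is not constructed anywhere, and the asserted cyclotomic factorization of $c_m(\b{q})$ is exactly the statement to be proved, restated. Deferring the difficulty to ``$q$-analogs of (\ref{4.3})--(\ref{4.6})'' does not reduce it.

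There are two further gaps. First, the conjecture is stated for all $n\geq 3$, and your plan explicitly abandons $n>3$; for general $n$ there is no analog of Lemma \ref{lem:3.4} or of the two-generator structure, so the reduction to a single scalar $c_m(\b{q})$ is unavailable. Second, even for $n=3$ your closing degree-count argument (``any $p$ outside the stated range would produce no polynomial of degree below $mn+1$'') only shows that the \emph{specific construction} $P_{m,q}$ of Theorem \ref{thm:5.6} fails outside the range; it does not rule out a low-degree quasi-invariant of some other shape, which is precisely what the conjecture asserts cannot exist. A correct proof must exclude all such elements, not just those of the form (\ref{eq:5.1}).
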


In the case $n=3$, the Hilbert series of $Q_{m,q}(3)$ is different from the Hilbert series of $Q_m(3,\C)$ if and only if $Q_{m,q}(3)_\std$ contains a polynomial of degree $3m$. Note that by Proposition \ref{prop:5.3} there are no polynomials in $Q_{m,q}(3)_\sgn$ of degree $3m$. So using the characterization of $Q_{m,q}(3)$ given by Lemma \ref{lem:3.2}, the condition that a polynomial of degree $3m$ in $Q_{m,q}(3)_\std$ exists reduces to a polynomial equation in $q$. By directly computing these polynomials, we verified this conjecture for $n=3,m\leq 3$. Note also that this implies that for $n=3$ and a fixed $m$, there are only finitely many values of $q$ for which the Hilbert series of $Q_{m,q}(3)$ differs from the Hilbert series of $Q_m(3,\C)$. We conclude this section with a lemma about the lowest degree generator in the standard representation part of the $n=3$ case.

\begin{lemma}
    Let $n=3$ and $p,q$ satisfy the condition in Theorem \ref{thm:5.6}. Then the polynomial $P_{m,q}$ in \ref{eq:5.1} is the minimal degree generator of $Q_{m,q}(3)_\std$.
\end{lemma}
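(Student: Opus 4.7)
The plan is to establish three claims: (i) $P_{m,q}$ lies in $Q_{m,q}(3)_{\std}$; (ii) $P_{m,q}$ is a generating representation of $Q_{m,q}(3)_{\std}$; and (iii) no generating representation has strictly smaller degree. Together these imply that $P_{m,q}$ is a minimum-degree generator.

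For (i), in each subcase of (\ref{eq:5.1}) the polynomial $P_{m,q}$ factors as $(x_1^p - x_2^p) \cdot S$ with $S \in \C[x_1,x_2,x_3]^{S_3}$, since the products $\prod_{i \neq j}\prod_k(x_i - q^k x_j)$ (taken over all ordered pairs) and $\prod_{i<j}(x_i + x_j)$ are manifestly $S_3$-symmetric. Hence $s_{1,2}P_{m,q} = -P_{m,q}$, and the identity $(x_1^p - x_2^p) + (x_2^p - x_3^p) + (x_3^p - x_1^p) = 0$ yields $P_{m,q} + sP_{m,q} + s^2 P_{m,q} = 0$. By the $q$-analog of Lemma~\ref{lem:3.2}, $P_{m,q}$ lies in $Q_{m,q}(3)_{\std} \oplus Q_{m,q}(3)_{\sgn}$; since $s_{1,3}$ sends $x_1^p - x_2^p$ to $x_3^p - x_2^p$, we have $s_{1,3}P_{m,q} \neq -P_{m,q}$, ruling out the sgn component.

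For (ii), set $K_{m,q} := P_{m,q}/A$ with $A = \prod_{k=-m}^m(x_1 - q^k x_2)$, and compute $K_{m,q}$ explicitly in each subcase of (\ref{eq:5.1}) using the identities $(x_b - q^k x_a) = -q^k(x_a - q^{-k}x_b)$. When $p \geq 2m+1$, the quotient $K_{m,q}$ is (up to a scalar) a product of linear factors $(x_1 - q^{\pm j}x_2)$ depending only on $x_1$ and $x_2$; when $p < 2m+1$, it is a product of linear factors $(x_i - q^{\pm j}x_3)$ with $i \in \{1,2\}$. In either case, for every irreducible linear factor $(x_a - q^j x_b)$ appearing in $K_{m,q}$, at least one element of its $S_3$-orbit is absent: when $p \geq 2m+1$ the orbit members involving $x_3$ are missing, and when $p < 2m+1$ the orbit members of the form $(x_1 - q^j x_2)$ are missing. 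By unique factorization in $\C[x_1,x_2,x_3]$, any nonconstant $S_3$-symmetric divisor of $K_{m,q}$ would have to be a product of full $S_3$-orbits of irreducible factors, which is therefore impossible. By the $q$-analog of Corollary~\ref{cor:3.3}, $P_{m,q}$ is a generating representation.

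For (iii), suppose a generating representation $V$ has $\deg V < \deg P_{m,q}$, and let $W$ be the generating representation containing $P_{m,q}$ (which exists by (ii)); then $\deg W = \deg P_{m,q}$ and $V \neq W$. The $q$-analog of Lemma~\ref{lem:3.4} forces $\deg V + \deg W \geq 6m+3$, but the hypotheses of Theorem~\ref{thm:5.6} ensure $\deg P_{m,q} \leq mn = 3m$ (as verified in its proof), so $\deg V + \deg W < 2\deg P_{m,q} \leq 6m$, a contradiction. The main technical hurdle is (ii): carrying out the factorization of $K_{m,q}$ and verifying the orbit condition uniformly across the three subcases of (\ref{eq:5.1}). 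Once this is in hand, (iii) follows in a single line from the $q$-analog of Lemma~\ref{lem:3.4}.
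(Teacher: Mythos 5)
Your overall route is essentially the paper's: everything hinges on (a) the cofactor $K_{m,q}=P_{m,q}/\prod_{k=-m}^m(x_1-q^kx_2)$ having no nonconstant symmetric factor, and (b) the $q$-analog of Lemma~\ref{lem:3.4} forcing at most one generating representation of degree $\leq 3m$. Your orbit argument for (a) is a welcome fleshing-out of what the paper dismisses as ``easy to see,'' and your steps (i) and (iii) are fine.

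The one genuine slip is in step (ii): you invoke ``the $q$-analog of Corollary~\ref{cor:3.3}'' to conclude that $P_{m,q}$ is a generator, but Corollary~\ref{cor:3.3} is a one-way implication (generating representation $\Rightarrow$ no nonconstant symmetric factor of $K$), and you are using its converse. The converse is false in general: if $A, B$ generate $Q_m(3,\k)_\std$ in degrees $d$ and $6m+3-d$, an element such as $Q_1A+Q_2B$ with $Q_1,Q_2$ nonconstant symmetric typically has a cofactor $Q_1K_A+Q_2K_B$ with no symmetric factor, yet it is not a generator. The claim is nevertheless true here, and the repair is exactly the degree argument you already use in (iii), which is how the paper proceeds: if $P_{m,q}$ were not a generator it would lie in the submodule generated by representations of degree $<\deg P_{m,q}\leq 3m$; by the $q$-analog of Lemma~\ref{lem:3.4} there is at most \emph{one} such generating representation, with $s_{1,2}$-antiinvariant line spanned by some $C=K_C\prod_{k=-m}^m(x_1-q^kx_2)$, so $P_{m,q}=QC$ for a single nonconstant symmetric $Q$ and hence $K_{m,q}=QK_C$, contradicting (a). With (ii) justified this way rather than by the converse of Corollary~\ref{cor:3.3}, your proof is correct and coincides in substance with the paper's.
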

\begin{proof}
    Let us write $P_{m,q}=L\prod_{k=-m}^m(x_1-q^kx_2)$ for a polynomial $L$. Then assume for contradiction that there exists a generator of $Q_{m,q}(3)_\std$ of lower degree than $P_{m,q}$. By Lemma \ref{lem:3.5} there can only be one such generator, and by Lemma \ref{lem:3.2} this generator can be expressed as $K\prod_{k=-m}^m(x_1-q^kx_2)$ for a polynomial $K$. Since this is the only generator of degree at most the degree of $P_{m,q}$, $P_{m,q}$ must be a symmetric polynomial multiple of $K\prod_{k=-m}^m(x_1-q^kx_2)$, and thus $L$ must be a symmetric polynomial multiple of $K$. But it is easy to see directly from the definition of $P_{m,q}$ that $L$ is not divisible by any nontrivial symmetric polynomial, so we have a contradiction.
\end{proof}

\section*{Acknowledgements}
I would like to thank Calder Morton-Ferguson for mentoring this project and for providing me with various resources, advice, and ideas. I would like to thank Roman Bezrukavnikov for suggesting the original project and for useful discussions. I would also like to thank Pavel Etingof for suggesting a closely related but more tractable problem to pursue and for providing resources and valuable insight to help tackle the problem. I would also like to thank Michael Ren for useful discussions regarding his paper with Xu \cite{Ren_2020}. I would like to thank Ankur Moitra and David Jerison for overall advice about the project and research in general. Lastly, I would like to thank Slava Gerovitch and the MIT Math Department for running the Summer Program for Undergraduate Research (SPUR) and Undergraduate Research Opportunities Program (UROP), during which this research was conducted.

\printbibliography
\end{document}